\documentclass{amsart}
\usepackage{amsmath}
\usepackage{amssymb}
\usepackage{amsthm}
\usepackage{enumerate}
\usepackage[dvipdfmx]{graphicx}
\theoremstyle{definition}
\newtheorem{definition}{Definition}[section]
\theoremstyle{plain}
\newtheorem{lemma}[definition]{Lemma}

\newtheorem{theorem}[definition]{Theorem}

\theoremstyle{remark}
\newtheorem{remark}[definition]{Remark}

\newtheorem{example}[definition]{Example}

\newcommand{\myIso}{\operatorname{iso}}
\newcommand{\mycl}{\operatorname{cl}}
\newcommand{\myfr}{\operatorname{fr}}
\newcommand{\myadh}{\operatorname{Adh}}
\newcommand{\mycon}{\operatorname{Con}}
\newcommand{\myreg}{\operatorname{Reg}}
\newcommand{\myint}{\operatorname{int}}
\newcommand{\myrank}{\operatorname{rank}}

\begin{document}
	
	\title[Connected components in d-minimal structures]{Connected components in d-minimal structures}
	\author[M. Fujita]{Masato Fujita}
	\address{Department of Liberal Arts,
		Japan Coast Guard Academy,
		5-1 Wakaba-cho, Kure, Hiroshima 737-8512, Japan}
	\email{fujita.masato.p34@kyoto-u.jp}
	
	\begin{abstract}
		For a given d-minimal expansion $\mathfrak R$ of the ordered real field, we consider the expansion $\mathfrak R^\natural$ of $\mathfrak R$ generated by the sets of the form $\bigcup_{S \in \mathcal C}S$, where $\mathcal C$ is a subfamily of the collection of connected components of an $\mathfrak R$-definable set.
		We prove that $\mathfrak R^{\natural}$ is d-minimal.
		A similar assertion holds for almost o-minimal expansions of ordered groups.
	\end{abstract}
	
	\subjclass[2020]{Primary 03C64}
	
	\keywords{d-minimal structures; almost o-minimal structures}
	
	\thanks{The author was supported by JSPS KAKENHI Grant Number JP25K07109. He is grateful to Chris Miller for helpful discussions on this topic.}
	
	\maketitle	

\section{Introduction}\label{sec:intro}
Every connected component of a set definable in an o-minimal structure over $\mathbb R$ is definable, but this is not the case for locally o-minimal structures and d-minimal structures over $\mathbb R$.
For instance, \cite[Theorem 1.9]{DMST} gives a necessary and sufficient condition for every connected component of a definable set to be definable for locally o-minimal expansions of $(\mathbb R,<,+,\mathbb Z)$.
$(\mathbb R,<,+,\mathbb Z)$ has a definable set whose connected component is not definable.
The structure $(\mathbb R,+,\cdot,2^{\mathbb Z})$ is a d-minimal structure by \cite{vdD_dmin}, but there exists a set definable in this structure one of whose connected component is not definable \cite[Example 79]{Sav}.
A question is whether a given tame structure $\mathfrak R$ can be expanded to a structure $\mathfrak R^\natural$ so that every connected component of a set definable in $\mathfrak R$ is definable in $\mathfrak R^\natural$ without loosing the given tameness such as local o-minimality and d-minimality.

A d-minimal structure \cite{Miller-dmin} is a generalization of o-minimal structures.
An expansion $\mathfrak R$ of $(\mathbb R,<)$ is \textit{d-minimal} if, for every $n \geq 0$ and every definable subset $X$ of $\mathbb R^{n+1}$, there exists $N \in \mathbb N$ such that the fiber $X_x:=\{y \in \mathbb R\;|\; (x,y) \in X\}$ is a union of an open set and at most $N$ discrete sets.
Our main theorem illustrates how to construct a structure $\mathfrak R^\natural$ satisfying the above condition when $\mathfrak R$ is a d-minimal expansion of an ordered field.
The strategy of the proof is applicable to a slightly different setting.
An expansion $\mathfrak R$ of dense linear order without endpoints is \textit{almost o-minimal} if every bounded unary definable set is a union of a finite set and finitely many open intervals \cite{Fuji_almost}.
As we explain in Remark \ref{rem:almost}, the structure $\mathfrak R^\natural$ constructed in the same manner is almost o-minimal if $\mathfrak R$ is almost o-minimal even if the underlying space is not necessarily $\mathbb R$.

For every topological space $T$, $\mycon(T)$ denotes the collection of connected components of $T$.
For every expansion $\mathfrak R$ of $(\mathbb R,<)$, $\mathfrak R^\natural$ denotes the expansion of $\mathfrak R$ generated by all the sets of the form $\bigcup_{S \in \mathcal C}S$ for an $n \in \mathbb N$, an $\mathfrak R$-definable subset $X$ of $\mathbb R^n$ and a subfamily $\mathcal C$ of $\mycon(X)$.

Our main theorem is as follows:
\begin{theorem}\label{thm:connected_expansion}
For every d-minimal expansion $\mathfrak R$ of the ordered field of reals, $\mathfrak R^\natural$ is d-minimal.
Furthermore, a subset of $\mathbb R^n$ is $\mathfrak R^\natural$-definable if and only if it is a union of finitely many sets of the form $\bigcup_{S \in \mathcal C}S$, where $\mathcal C$ is a subfamily of $\mycon(X)$ for an $\mathfrak R$-definable subset $X$ of $\mathbb R^n$.
In particular, for every $\mathfrak R^\natural$-definable set $X$ and a subfamily $\mathcal C$ of $\mycon(X)$, $\bigcup_{S \in \mathcal C}S$ is $\mathfrak R^\natural$-definable. 
\end{theorem}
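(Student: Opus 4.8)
The plan is to identify the $\mathfrak R^\natural$-definable sets explicitly. Let $\mathcal F$ denote the collection of all finite unions of sets of the form $\bigcup_{S\in\mathcal C}S$ with $\mathcal C\subseteq\mycon(X)$ for some $\mathfrak R$-definable $X$; call such a union \emph{basic}. Since every $\mathfrak R$-definable set $X$ equals $\bigcup_{S\in\mycon(X)}S$ and every basic set is one of the generators of $\mathfrak R^\natural$, the family $\mathcal F$ contains every $\mathfrak R$-definable set and is contained in the collection of $\mathfrak R^\natural$-definable sets. Hence it suffices to prove that $\mathcal F$ is itself a structure: then $\mathcal F$ is exactly the collection of $\mathfrak R^\natural$-definable sets, which is the ``furthermore'' clause, and d-minimality together with the ``in particular'' clause can be read off afterwards.

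First I would dispatch the Boolean operations and products, which are soft. For complementation, if $A=\bigcup_{S\in\mathcal C}S$ with $\mathcal C\subseteq\mycon(X)$ and $X\subseteq\mathbb R^n$, then $\mathbb R^n\setminus A=(\mathbb R^n\setminus X)\cup\bigcup_{S\in\mycon(X)\setminus\mathcal C}S$ is a union of two basic sets, because distinct connected components are disjoint. For intersection, given basic $A\subseteq X$ and $B\subseteq Y$, one passes to the $\mathfrak R$-definable set $W=X\cap Y$: each component of $W$ lies in a unique component of $X$ and a unique component of $Y$, so $A\cap B$ (which is contained in $W$) is precisely the union of those components of $W$ whose $X$- and $Y$-images were selected, again basic. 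For products one uses the topological fact $\mycon(X\times Y)=\{S\times T: S\in\mycon(X),\,T\in\mycon(Y)\}$, so a product of basic sets is basic. These facts, together with closure under coordinate permutations and the fact that $\mathcal F$ contains every $\mathfrak R$-definable set, reduce the whole problem to closure under the projection $\pi\colon\mathbb R^{n+1}\to\mathbb R^n$.

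The main obstacle is exactly this projection step, and it is where d-minimality must enter. The difficulty is that $\pi$ can merge distinct connected components of $X$: the images $\pi(S)$ of the selected components may overlap inside $\pi(X)$, so $\pi(A)$ need not respect $\mycon(\pi(X))$. My plan is an induction on $\dim X$. Using the d-minimal dimension theory for $\mathfrak R$, I would isolate the $\mathfrak R$-definable ``merging locus'' $B\subseteq\pi(X)$ over which two distinct components of $X$ meet the same fibre; d-minimality should guarantee that $B$ is $\mathfrak R$-definable with $\dim B<\dim\pi(X)$. Off $B$ the projection restricts to a map sending selected components of $X$ to components of a suitable $\mathfrak R$-definable set (as in the model case $\pi(X)=(0,\infty)\setminus 2^{\mathbb Z}$, whose components are the images of the components of $X$), so the top-dimensional part of $\pi(A)$ is basic; over $B$ one restricts to $A\cap(B\times\mathbb R)$, which lies in $\mathcal F$ by the already-proved intersection closure and has strictly smaller dimension, so its projection lies in $\mathcal F$ by the inductive hypothesis. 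Combining the two parts by a finite union keeps us in $\mathcal F$. I expect the delicate point to be the verification that the merging locus is genuinely $\mathfrak R$-definable and lower-dimensional, and that off it the selected components project to honest connected components of an $\mathfrak R$-definable set; this is the only place requiring real d-minimal input and is the crux of the argument.

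Once $\mathcal F$ is shown to be a structure, the ``furthermore'' clause is immediate. For d-minimality I would argue fibrewise: an $\mathfrak R^\natural$-definable $X\subseteq\mathbb R^{n+1}$ is a finite union of basic sets, and the property ``a union of an open set and at most $N$ discrete sets'' is preserved under finite unions (the bounds add), so it suffices to bound the fibres $A_x$ of a single basic set $A\subseteq X$; since $A_x\subseteq X_x$ is obtained by selecting parts of the fibre $X_x$, the d-minimal bound for the $\mathfrak R$-definable set $X$ controls $A_x$ uniformly in $x$. Finally, the ``in particular'' clause asserts that $\mathcal F$ is closed under passing to an arbitrary union of connected components of one of its own members; this does not follow formally from $\mathcal F$ being a structure, since an arbitrary union of classes of a definable equivalence relation need not be definable, but it should fall out of the same decomposition used for projection, which expresses the connected components of an $\mathcal F$-set through connected components of $\mathfrak R$-definable sets so that selecting a subfamily stays within $\mathcal F$.
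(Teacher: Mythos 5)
Your overall architecture agrees with the paper's: both identify the $\mathfrak R^\natural$-definable sets with the family $\mathcal F$ of finite unions of component-selections, both handle complement and intersection by exactly the same refinement argument, and both reduce everything to closure under the coordinate projection $\pi$. The Boolean part of your proposal is correct. The genuine gap is the projection step, which you yourself flag as ``the crux'' and leave as a plan rather than a proof; in the paper this step is the entire content of Section 2, namely the multi-cell decomposition theorem (Theorem \ref{thm:decomposition_into_multicells} and its Key Lemma \ref{lem:main_lemma}). After decomposing an $\mathfrak R$-definable set into multi-cells, every connected component of a multi-cell projects \emph{onto} a connected component of the (multi-cell) projection, so $\pi$ carries a selection of components to a selection of components and the merging problem disappears by construction. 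Your alternative route has two concrete obstructions. First, the ``merging locus'' $B=\{x\in\pi(X)\mid X\cap\pi^{-1}(x)$ meets two distinct components of $X\}$ is defined via the relation ``lie in the same connected component,'' which is not first-order, so there is no reason for $B$ to be $\mathfrak R$-definable; the paper never needs such a set because definability issues are pushed into the (definable) sets $P_i=\bigcup_x\myIso_i(\mycl(M)\cap\pi^{-1}(x))$ and the $\myadh^{\pm}$ loci of Lemma \ref{lem:well_ordered}. Second, even off any merging locus, the image $\pi(S)$ of a single component $S$ is merely a connected subset of $\pi(X)$; since a connected union of components of a definable set must be a single component, you would need $\pi(S)$ to be an honest component of some $\mathfrak R$-definable set, and nothing in your sketch produces such a set. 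That is precisely what the multi-cell decomposition manufactures, and it requires the covering-space-style monodromy argument of Claims 2 and 3 in Lemma \ref{lem:main_lemma}, not just dimension induction.

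A secondary, smaller gap: in the d-minimality argument you assert that the fibre bound for $X_x$ ``controls'' $A_x$ because $A_x\subseteq X_x$. A subset of a union of an open set and $N$ discrete sets need not again be of that form (consider $\mathbb Q\subseteq\mathbb R$), and selecting by components does not by itself repair this. The paper again resolves it through multi-cells: for a multi-cell $Y_1$ the fibre $(Y_1)_x$ is either open or discrete, and a union of fibres of selected components inherits this dichotomy. So both the projection step and the fibrewise bound ultimately rest on the decomposition theorem that your proposal does not supply.
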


\section{Multi-cell decomposition in $\mathfrak R$}\label{sec:multi-cell}
We prove a multi-cell decomposition theorem, which refines partition of definable sets into special submanifolds in d-minimal structures \cite{Miller-dmin, T}.
This theorem is used in the proof of a generalized version of Theorem \ref{thm:connected_expansion}.
Throughout this section, suppose that $\mathfrak R=(\mathbb R,<,+,\ldots)$ is a d-minimal expansion of the ordered group of reals.
`Definable' means `definable in $\mathfrak R$ with parameters'.
For a subset $S$ of $\mathbb R^n$, $\myint(S)$, $\mycl(S)$ and $\myfr(S):=\mycl(S) \setminus S$ denote the interior, closure and frontier of $S$, respectively.
%
Let $\pi:\mathbb R^n \to \mathbb R^{n-1}$ be the coordinate projection forgetting the last coordinate.
For a subset $S$ of $\mathbb R^n$, $\pi|_{S}$ denotes the restriction of $\pi$ to $S$.

A subset $M$ of $\mathbb R^n$ is called a \textit{submanifold} of dimension $d$ if, for every point $x$ in $M$, there exists a homeomorphism
$\varphi:U \to V$ from an open neighborhood $U$ of $x$ in $\mathbb R^n$ onto an open
neighborhood $V$ of the origin in $\mathbb R^n$ such that $\varphi(x)$ is the origin and the equality $\varphi(M \cap U)=V \cap (\mathbb R^d \times \{\overline{0}_{n-d}\})$ holds, 
where $\overline{0}_{n-d}$ denotes the origin of $\mathbb R^{n-d}$.
We say that a submanifold $M$ is a \textit{definable submanifold} if $M$ is definable and $U$, $V$ and $\varphi$ appeared in the above definition are definable.
Observe that a definable open set is a definable submanifold.

The following definition is quoted from \cite{Fuji_almost} and slightly modified.
\begin{definition}\label{def:multicell}
	A definable submanifold $X$ of $\mathbb R^n$ is a \textit{multi-cell} if it satisfies the following conditions:
	\begin{enumerate}
		\item[(a)] If $n=1$, either $X$ is a discrete definable set or all connected components of the definable set $X$ are open intervals. 
		\item[(b)] If $n>1$, the projection image $\pi(X)$ is a multi-cell and, for any connected component $Y$ of $X$, $\pi(Y)$ is a connected component of $\pi(X)$ and $Y$ is one of the following forms:
		\begin{align*}
			Y&=\pi(Y) \times \mathbb R  \text{,}\\
			Y&=\{(x,y) \in \pi(Y) \times \mathbb R \;|\; y=f(x)\} \text{,}\\
			Y &= \{(x,y) \in \pi(Y) \times \mathbb R \;|\; y>f(x)\} \text{,}\\
			Y &= \{(x,y) \in \pi(Y) \times \mathbb R \;|\; y<g(x)\} \text{ and }\\
			Y &= \{(x,y) \in \pi(Y) \times \mathbb R \;|\; f(x)<y<g(x)\}
		\end{align*}
		for some continuous functions $f$ and $g$ defined on $\pi(Y)$ with $f<g$.
	\end{enumerate} 
	
	Let $\{A_1, \ldots, A_k\}$ be a nonempty finite family of definable subsets of $\mathbb R^n$.
	A \textit{multi-cell decomposition} partitioning $\{A_1, \ldots, A_k\}$ is a family $\mathcal C$ of finitely many multi-cells satisfying the following conditions:
	\begin{itemize}
		\item For every $1 \leq i \leq k$, $A_i$ is the union of multi-cells in a subfamily of $\mathcal C$;
		\item If $n>1$, for every $C_1,C_2 \in \mathcal C$, either $\pi(C_1)=\pi(C_2)$ or $\pi(C_1) \cap \pi(C_2)=\emptyset$ holds;
		\item If $n>1$, the collection $\pi(\mathcal C):=\{\pi(C)\;|\;C \in \mathcal C\}$ is a multi-cell decomposition partitioning $\{\pi(A_1), \ldots, \pi(A_k)\}$.
	\end{itemize}
\end{definition}

A definable submanifold $M$ of $\mathbb R^n$ is \textit{pseudo-special} if $\pi(M)$ is a submanifold of $\mathbb R^{n-1}$ and, for every $x \in M$, there exists an open box $B$ in $\mathbb R^n$ containing $x$ such that $\pi|_{B \cap M}$ is a homeomorphism onto $\pi(B) \cap \pi(M)$.
\begin{example}
	In every expansion of the ordered field of reals, the circle $S^1:=\{(x,y) \in \mathbb R^2\;|\; x^2+y^2=1\}$ is a definable submanifold, but it is not pseudo-special.
	It is easy to see that a multi-cell $C$ is a a pseudo-special submanifold if, for every $x \in \mathbb R^{n-1}$, the fiber $C_x:=\{y \in \mathbb R\;|\;(x,y) \in C\}$ has an empty interior.
	The definable submanifold $M=\{(x,y)\;|\; xy=1 \text{ or } y=0\}$ is pseudo-special, but it is not a multi-cell.
\end{example}

Consider the following property of pseudo-special submanifolds $M$ of $\mathbb R^n$
\begin{enumerate}
	\item[($\dagger$)] Let $X$ be an arbitrary connected component of $M$.
	Let $(a_m)_{m \in \mathbb N}$ be a sequence in $\pi(X)$ converging to a point $a \in \pi(M)$. 
	For every $m \in \mathbb N$, let $b_m \in X$ such that $\pi(b_m)=a_m$.
	Then, $\{b_m\;|\; m \in \mathbb N\}$ is bounded.
\end{enumerate}
Observe that every bounded pseudo-special submanifolds enjoys property $(\dagger)$.

\begin{theorem}\label{thm:decomposition_into_multicells}
	Let $\{A_1 \ldots, A_k\}$ be a nonempty finite family of definable subsets of $\mathbb R^n$.
	There exists a multi-cell decomposition partitioning $\{A_1 \ldots, A_k\}$ if one of the following conditions is satisfied:
	\begin{enumerate}
		\item[(a)] $A_i$ is bounded for every $1 \leq i \leq k$;
		\item[(b)] For every $n \in \mathbb N$ and every pseudo-special submanifold $M$ of $\mathbb R^n$, property $(\dagger)$ is fulfilled.
	\end{enumerate}
\end{theorem}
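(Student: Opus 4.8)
The plan is to argue by induction on $n$, reducing at each stage to a decomposition of the base $\mathbb{R}^{n-1}$ equipped with a controlled family of ``boundary functions'' over it. Passing first to the atoms of the Boolean algebra generated by the $A_i$ lets me assume throughout that the $A_i$ are pairwise disjoint, since a multi-cell decomposition refining the atom partition automatically partitions each $A_i$. For $n=1$ there is no projection condition to satisfy: by d-minimality each $A_i$ is a union of an open set and finitely many discrete definable sets, so each frontier $\myfr(A_i)$ is discrete. Letting $D$ be the union of all these frontiers gives a discrete definable set, which is a multi-cell, and on each connected component (an open interval) of $\mathbb{R}\setminus D$ every $A_i$ is either wholly contained or disjoint; grouping the interval pieces of each $A_i$ and the points of $A_i\cap D$ then yields the decomposition.

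For $n>1$ I would split each $A_i$ into its fiberwise-open part $A_i^{\mathrm{op}}=\{(x,y)\in A_i : y\in\myint((A_i)_x)\}$ and its fiberwise-thin complement. d-minimality bounds the number of discrete fiber points, so the thin parts, together with the interval endpoints of the open parts, assemble into a single definable set $E\subseteq\mathbb{R}^n$ that is finite-to-one under $\pi$. Using the decomposition of definable sets into special submanifolds \cite{Miller-dmin, T}, I would partition $E$ into finitely many pseudo-special submanifolds $M_1,\dots,M_s$, each thin under $\pi$ and hence locally a graph over its projection.

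The technical heart is the following claim, which is exactly where hypotheses (a) and (b) enter: \emph{if $M$ is a pseudo-special submanifold satisfying $(\dagger)$, then for every connected component $X$ of $M$ the image $\pi(X)$ is a connected component of $\pi(M)$ and $\pi|_X\colon X\to\pi(X)$ is a homeomorphism}; thus $X$ is the graph of a continuous definable function on $\pi(X)$. Openness of $\pi(X)$ in $\pi(M)$ is immediate from the local-homeomorphism condition, while closedness is precisely what $(\dagger)$ delivers: a sequence in $\pi(X)$ converging to $a\in\pi(M)$ has bounded lifts, so a subsequence of lifts converges to a point of $M$ lying over $a$, which the local graph structure then places in $X$. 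Under (a) this is automatic, since bounded pseudo-special submanifolds enjoy $(\dagger)$; under (b) it is assumed outright. This is the step I expect to be the main obstacle, as it is what keeps the bounding functions from escaping to $\pm\infty$ along frontier directions, what secures single-valuedness of $\pi|_X$, and what forces components of cells to sit over full components of the base.

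Finally I would assemble the decomposition. Applying the induction hypothesis to the base, I obtain a multi-cell decomposition $\mathcal{D}$ of $\mathbb{R}^{n-1}$ partitioning all of the sets $\pi(A_i)$, $\pi(M_j)$, and the pairwise coincidence loci $\{x: f_j(x)=f_l(x)\}$ of the continuous functions produced by the heart claim. Over each connected component $V$ of a member of $\mathcal{D}$, the relevant $M_j$ restrict to finitely many continuous graphs that are pairwise non-crossing, hence strictly ordered by continuity and connectedness as $f_1<\cdots<f_r$; the graphs, the bands $f_j<y<f_{j+1}$, the one-sided regions, and the full fibers $V\times\mathbb{R}$ then realize the five multi-cell forms over $V$. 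Collecting the pieces lying above a fixed member of $\mathcal{D}$ gives the multi-cells of $\mathbb{R}^n$, and the three axioms—components projecting onto components, projections equal or disjoint, and $\pi(\mathcal{C})=\mathcal{D}$ a multi-cell decomposition of the $\pi(A_i)$—follow from the heart claim together with the corresponding properties of $\mathcal{D}$. Since the $A_i$ were reduced to pairwise disjoint sets, each is a union of the resulting multi-cells, completing the induction.
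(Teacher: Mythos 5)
Your outline runs into two genuine obstructions, both stemming from treating the d-minimal setting as if it were o-minimal. The central one is the assertion that the fiberwise-thin parts ``assemble into a single definable set $E$ that is finite-to-one under $\pi$.'' D-minimality bounds the number of \emph{discrete sets} needed to cover a fiber, not the number of points: in $(\mathbb R,+,\cdot,2^{\mathbb Z})$ the set $(0,1)\times 2^{\mathbb Z}$ is fiberwise discrete but infinite-to-one over the base. Consequently your final assembly step --- finitely many pairwise non-crossing graphs ordered as $f_1<\cdots<f_r$ over each base component, cut along their coincidence loci --- is unavailable: a single multi-cell here typically has countably many connected components, each a graph over the base, and they cannot be separated by finitely many definable bands. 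The paper's entire technical apparatus (the fiberwise rank stratification $P_i=\bigcup_{x}\myIso_i(\mycl(M)\cap\pi^{-1}(x))$, the sets $\myadh^{\pm}(M,P)$, and the ``extremely well-ordered pair'' condition enforced off a nowhere dense set via Lemma \ref{lem:well_ordered}) exists precisely to control these infinite families, and nothing in your outline replaces it. The same oversight already appears at $n=1$: $\myfr(A_i)$ need not be discrete (for $A=\{2^{-n}+2^{-n-m}\;|\;n,m\in\mathbb N\}$ the frontier accumulates at $0$), which is why the paper stratifies by iterated isolated points of $\mycl(A_i)$ rather than cutting along a discrete frontier.

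Your ``technical heart'' claim is also false as stated. Pseudo-speciality plus $(\dagger)$ gives a convergent subsequence of lifts $b_m$, but its limit $b$ lies only in $\mycl(M)\cap\pi^{-1}(a)$ and may fall in $\myfr(M)$ rather than in $M$, so the local graph structure has nothing to place in $X$. Concretely, $M=\bigl((0,1)\times\{0\}\bigr)\cup\{(x,2^{-n})\;|\;n\in\mathbb N,\ 0<x<2^{-n}\}$ is a bounded pseudo-special submanifold (hence satisfies $(\dagger)$) with $\pi(M)=(0,1)$ connected, yet the component $X=\{(x,2^{-n})\;|\;0<x<2^{-n}\}$ projects onto the proper subset $(0,2^{-n})$; the lifts over $a_m\to 2^{-n}$ converge to $(2^{-n},2^{-n})\in\myfr(M)$. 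This is why the paper's Key Lemma (Lemma \ref{lem:main_lemma}) carries the extra hypotheses (3)--(6) --- in the example above, clause (3) fails at $x=2^{-n}$ --- and why its Claims 1 and 2 perform a delicate analysis of $\myfr(X)\cap\pi^{-1}(\pi(M))$ using the strata $P_i$ to conclude that $\pi(b)\in\pi(X)$ even when $b\notin M$. Separately, even granting that $\pi(X)$ is open and closed in a component $V$ of $\pi(M)$, injectivity of $\pi|_X$ does not follow; the paper obtains it from path- and homotopy-lifting over the simply connected $V$ (Claim 3), a covering-space argument your proposal omits entirely.
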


The rest of this section is devoted to a proof of Theorem \ref{thm:decomposition_into_multicells}.
This proof is a mixture of Thamrongthanyalak's proof of \cite[Theorem B]{T} and the author's proof of \cite[Theorem 4.22]{Fuji_almost}.
As an intermediate step, we want to find a sufficient condition for a pseudo-special manifold to be a multi-cell.
For that purpose, we define several technical conditions and give examples which illustrate why these conditions are necessary. 
Our intermediate target (Lemma \ref{lem:main_lemma}) is to show that a pseudo-special submanifold is a multi-cell if $\pi(M)$ is a multi-cell and $M$ satisfies these technical conditions. 

Now we start to introduce technical definitions and notations.
Let $S$ be a closed subset of a topological space. 
Let $\myIso(S)$ be the set of isolated points in $S$.
For every nonnegative integer $m$, we define $\myIso_m(S)$ as follows:
\begin{itemize}
	\item $\myIso_0(S)=\myIso(S)$;
	\item $\myIso_m(S)=\myIso(S \setminus \bigcup_{i=0}^{m-1}\myIso_i(S))$;
\end{itemize}
Define $\myrank(S)$ as the minimum of positive integers $m$ such that $\myIso_m(S)=\emptyset$.
$\myrank(S)$ is called the \textit{Cantor-Bendixson rank} of $S$.
We put $\myrank(S)=\infty$ if such an $m$ does not exist.

A subset $C$ of $\mathbb R^n$ is a \textit{connected graph} on a connected subset $V$ of $\mathbb R^{n-1}$ if $C$ is the graph of a continuous function $f_C:V \to \mathbb R$.
Throughout, for every connected graph $C$, $f_C$ denotes the continuous function whose graph is $C$.
For a connected graph $C$ on $V$, we put 
\begin{align*}
	&C^+:=\{(x,t) \in V \times \mathbb R\;|\; t>f_C(x)\} \text { and }\\
	&C^-:=\{(x,t) \in V \times \mathbb R\;|\; t<f_C(x)\}. 
\end{align*}
Observe that $C^+$ and $C^-$ are connected.
Let $X$ be a connected subset of $\mathbb R^n$ with $\pi(X) \subseteq V$ and $X \cap C =\emptyset$.
We do not require that $X$ is a connected graph.
We have either $X \subseteq C^+$ or $X \subseteq C^-$ because $X \cap C^+$ and $X \cap C^-$ are closed and open in the connected set $X$ .
We denote 
\begin{center}
	$X>_V\underline{C}$ if $X \subseteq C^+$ and $X<_V\underline{C}$ if $X \subseteq C^-$.
\end{center}
Underlines below $C$ are used to clarify which is a connected graph.

Let $C_1, C_2 \subseteq \mathbb R^n$ be two connected graphs on $V$ with $\underline{C_1}<_V \underline{C_2}$.
We denote $\{(x,t) \in V \times \mathbb R\;|\; f_{C_1}(x)<t<f_{C_2}(x)\}$ by $(C_1,C_2)_V$ and $\{(x,t) \in V \times \mathbb R\;|\; f_{C_1}(x) \leq t \leq f_{C_2}(x)\}$ by $[C_1,C_2]_V$.

\begin{lemma}\label{lem:easy}
	Let $V \subseteq \mathbb R^{m-1}$ be a submanifold and $C_1,C_2 \subseteq \mathbb R^m$ be connected graphs on $V$.
	Let $X$ be a connected subset of $\mathbb R^n$ such that $\pi(X)$ is a proper open subset of $V$ and $\underline{C_1}<_VX<_V\underline{C_2}$.
	Then $\myfr(\pi(X)) \cap V \neq \emptyset$, and, for every $x \in \myfr(\pi(X)) \cap V$, there exists $y \in \myfr(X) \cap [C_1,C_2]_V$ with $\pi(y)=x$.
\end{lemma}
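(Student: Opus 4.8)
The plan is to handle the two assertions separately, the first being a purely topological observation and the second carrying the real content. For the first claim I would work relative to $V$: since $\pi(X)$ is open in $V$ and contained in $V$, its frontier taken inside the subspace $V$ is exactly $\mycl(\pi(X)) \cap V$ minus $\pi(X)$, which equals $\myfr(\pi(X)) \cap V$. A nonempty proper open subset of a connected space cannot be closed in that space, so the connectedness of $V$ forces the relative frontier $\myfr(\pi(X)) \cap V$ to be nonempty. This is where the hypothesis that $\pi(X)$ is a \emph{proper} open subset is consumed.

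For the second claim, fix $x \in \myfr(\pi(X)) \cap V$ and choose a sequence $(x_m)_{m \in \mathbb{N}}$ in $\pi(X)$ converging to $x$. Since each $x_m$ lies in $\pi(X)$, I can lift it to a point $y_m = (x_m, t_m) \in X$ with $\pi(y_m) = x_m$. The hypothesis $\underline{C_1} <_V X <_V \underline{C_2}$ says precisely that $X \subseteq (C_1,C_2)_V$, so $f_{C_1}(x_m) < t_m < f_{C_2}(x_m)$ for every $m$.

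The key step is to bound the vertical coordinates $t_m$. Because $x \in V$, both $f_{C_1}$ and $f_{C_2}$ are defined and continuous at $x$, so $f_{C_1}(x_m) \to f_{C_1}(x)$ and $f_{C_2}(x_m) \to f_{C_2}(x)$; hence $(t_m)$ is eventually trapped in a bounded neighbourhood of the interval $[f_{C_1}(x), f_{C_2}(x)]$ and is therefore bounded. Passing to a subsequence I may assume $t_m \to t$ for some $t \in \mathbb{R}$, and taking limits in the two strict inequalities gives $f_{C_1}(x) \leq t \leq f_{C_2}(x)$, i.e. $y := (x,t) \in [C_1,C_2]_V$. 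Then $y_m \to y$ with each $y_m \in X$, so $y \in \mycl(X)$; and since $\pi(y) = x \notin \pi(X)$, we have $y \notin X$, whence $y \in \myfr(X)$. This exhibits the required $y \in \myfr(X) \cap [C_1,C_2]_V$ with $\pi(y) = x$.

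The only delicate point is the boundedness of $(t_m)$, and it is exactly here that the restriction to $x \in \myfr(\pi(X)) \cap V$ (as opposed to an arbitrary frontier point of $\pi(X)$) is essential: if $x$ lay in $\myfr(V) \setminus V$, the values $f_{C_1}(x_m)$ and $f_{C_2}(x_m)$ could diverge and the lifts $y_m$ could escape to infinity, so no convergent subsequence need exist and the conclusion would fail. Everything else is routine continuity and closure bookkeeping.
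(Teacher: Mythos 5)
Your proof is correct and follows essentially the same route as the paper's: nonemptiness of the relative frontier from connectedness of $V$, then lifting a sequence $x_m \to x$ into $X$, extracting a convergent subsequence of the last coordinates, and passing to the limit in $f_{C_1} < t_m < f_{C_2}$. Your way of bounding the $t_m$ locally via continuity of $f_{C_1}$ and $f_{C_2}$ at $x$ is in fact a slightly cleaner justification than the paper's appeal to boundedness of $X$ after shrinking $V$.
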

\begin{proof}
	Connectivity of $V$ and openness of $\pi(X)$ in $V$ imply $\myfr(\pi(X)) \cap V \neq \emptyset$.
	Fix an arbitrary $x \in \myfr(\pi(X)) \cap V$.
	Replacing $V$ with a small neighborhood of $x$ in $V$, we may assume that $V$ is bounded.
	Take a sequence $\{x_m\}_{m \in \mathbb N}$ in $\pi(X) \cap V$ converging to $x$.
	Let $c_m \in \mathbb R$ with $(x_m,c_m) \in X$.
	The condition $\underline{C_1}<_VX<_V\underline{C_2}$ implies that $X$ is bounded. 
	Since $X$ is bounded, we may assume that $c:=\lim_{m \to \infty}c_m$ exists by taking a subsequence.
	We have $y:=(x,c) \in \myfr(X)$.
	We have $f_{C_1}(x) \leq c \leq f_{C_2}(x)$ because $\underline{C_1}<_VX<_V\underline{C_2}$.
	This means $y \in [C_1,C_2]_V$.
\end{proof}

Even if $M$ is simultaneously a multi-cell and a pseudo-special submanifold, $\myfr(M)$ does not necessarily empty as the following example illustrates. 
\begin{example}\label{ex:pair}
	Recall that $(\mathbb R,+,\cdot,2^{\mathbb Z})$ is d-minimal.
	$M:=\{x>0\} \times 2^{\mathbb Z} \subseteq \mathbb R^2$ is a multi-cell, and it is simultaneously a pseudo-special submanifold.
	We have $\myfr(M)=\{x>0\} \times \{0\}$.
\end{example}

Let $M$ be a multi-cell which is simultaneously a pseudo-special submanifold.
Put $$P_i:=\bigcup_{x \in \pi(M)} \myIso_i(\mycl(M) \cap \pi^{-1}(x)).$$
The pair $(M,P_i)$ is expected to satisfy some tame condition.
This is the reason why we introduce the following notion called well-ordered pairs.

Let $M$ and $P$ be definable subsets of $\mathbb R^n$.
We say that $(M,P)$ is a \textit{well-ordered pair} if the following conditions are satisfied:
\begin{enumerate}
	\item[(a)] $M$ is a pseudo-special submanifold of $\mathbb R^n$.
	\item[(b)] $\pi(P)=\pi(M)$;
	\item[(c)] $P \subseteq \myfr(M) \cap \pi^{-1}(\pi(M))$;
	\item[(d)] $P$ is a multi-cell.
\end{enumerate}

\begin{example}\label{ex:pair2}
	Let $M$ be as in Example \ref{ex:pair}.
	For every $x>0$, the fiber $M_x:=\{y \in \mathbb R\;|\; (x,y) \in M\}$ is of Cantor-Bendixson rank $2$.
	We have $P_1:=\bigcup_{x>0}\myIso_1(\mycl(M)_x)=\myfr(M)$.
	The pair $(M,P_1)$ is a well-ordered pair.
\end{example}

\begin{lemma}\label{lem:easyeasy}
For a well-ordered pair $(M,P)$, every connected component of the multi-cell $P$ is a connected graph.
\end{lemma}
\begin{proof}
	Let $Q$ be a connected component of $P$.
	By condition (d), $P$ is a multi-cell.
	$V:=\pi(Q)$ is a connected component of $\pi(P)$ by the definition of multi-cells.
	By condition (b), $V$ is a connected component of $\pi(M)$.
	Assume for contradiction that $Q$ is not the graph of a continuous function defined on $V$.
	By Definition \ref{def:multicell}(b), 
	$Q$ is open in $V \times \mathbb R$.
	Since $V$ is a connected component of $\pi(M)$, $Q$ is open in $\pi(M) \times \mathbb R$.
	On the other hand, condition (c) implies, that $P \cap M=\emptyset$ and, for any point $q \in Q$, we can choose a sequence $(c_n)$ in $M$ converging to $q$, which contradicts that $Q$ is open in $\pi(M) \times \mathbb R$.
	
	$Q$ is a connected graph because it is the graph of a continuous function defined on the connected set $V$.
\end{proof}

The following example illustrates that $M$ is not necessarily a multi-cell even if the fiber $M_x$ is of Cantor-Bendixson rank $2$ and $(M,P_1)$ is a well-ordered pair.
\begin{example}\label{ex:pair3}
	Recall that $(\mathbb R,+,\cdot,2^{\mathbb Z})$ is d-minimal.
	Let $M:=\{(x,2^kx)\;|\;x>0, k \in \mathbb Z\} \cup \{(x,-2^k)\;|\;x \in \mathbb R, k \in \mathbb Z\}$.
	Let $P_1:=\myfr(M)=\mathbb R \times \{0\}$.
	The pair $(M,P)$ is a well-ordered pair, but $M$ is not a multi-cell.
\end{example}
We introduce the notion of extremely well-ordered pairs in order to exclude the case given in Example \ref{ex:pair3}.

For a well-ordered pair $(M,P)$, every $C \in \mycon(P)$ is a connected graph by Lemma \ref{lem:easyeasy}.
Let $\myadh^+(M,P)$ be the set of points $z \in P$ such that $z \in \myfr(M \cap C^+)$, where $C \in \mycon(P)$ with $z \in C$. 
We define $\myadh^-(M,P)$ in the same manner.	
For every $x \in \pi(M)$, put $$\myadh^+(M,P,x):=\{x\} \times \{t \in \mathbb R\;|\; (x,t) \in P \cap \myfr(M \cap (\{x\} \times (t,\infty)))\}.$$
We define $\myadh^-(M,P,x)$ similarly.	
For every $x \in \pi(M)$, we have $\myadh^+(M,P,x) \subseteq \myadh^+(M,P) \cap \pi^{-1}(x)$ and $\myadh^-(M,P,x) \subseteq \myadh^-(M,P) \cap \pi^{-1}(x)$.

A well-ordered pair $(M,P)$ is an \textit{extremely well-ordered pair} if $$\myadh^+(M,P) \cap \pi^{-1}(x)=\myadh^+(M,P,x)$$ and $$\myadh^-(M,P) \cap \pi^{-1}(x)=\myadh^-(M,P,x)$$ for every $x \in \pi(M)$.
\begin{example}\label{ex:pair4}
	The pair $(M,P)$ in Example \ref{ex:pair3} is not an extremely well-ordered pair.
\end{example}

The following lemma guarantees that a well-ordered pair $(M,P)$ becomes an extremely well-ordered pair after removing a small definable set.
\begin{lemma}\label{lem:well_ordered}
Let $(M,P)$ be a well-ordered pair.
Let $x_0 \in \pi(M)$.
Then, $\myadh^+(M,P)$, $\myadh^-(M,P)$, $\myadh^+(M,P,x_0)$ and $\myadh^-(M,P,x_0)$ are definable.
Furthermore, $$\{x \in \pi(M)\;|\; \myadh^+(M,P) \cap \pi^{-1}(x) \neq \myadh^+(M,P,x)\}$$ and $$\{x \in \pi(M)\;|\; \myadh^-(M,P) \cap \pi^{-1}(x) \neq \myadh^-(M,P,x)\}$$ are nowhere dense in $\pi(M)$.
\end{lemma}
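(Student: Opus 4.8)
The plan is to treat the two pointwise sets, then the two global sets, and finally the nowhere-density claim, in that order.

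The pointwise sets are the easy case. For fixed $x_0$ one has, directly from the definitions, $\myadh^+(M,P,x_0)=\{x_0\}\times\{t:(x_0,t)\in P\text{ and }t\in\mycl(M_{x_0}\cap(t,\infty))\}$, where $M_{x_0}$ and $P_{x_0}$ denote the fibres over $x_0$. This is cut out by a first-order condition on the definable fibres $M_{x_0},P_{x_0}$, hence is definable; the same formula works uniformly in $x_0$, and $\myadh^-(M,P,x_0)$ is handled symmetrically. So the content of the first assertion lies entirely in the \emph{global} sets $\myadh^\pm(M,P)$.

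The genuine obstacle, and the first hard step, is that the definition of $\myadh^+(M,P)$ refers to the connected component $C$ of $P$ through $z$, and such components need not be definable in a d-minimal structure. I would eliminate $C$ by exploiting locality of the frontier together with the multi-cell structure. First, $z\in\myfr(M\cap C^+)$ depends only on $M\cap C^+$ in arbitrarily small neighbourhoods of $z=(x,t)$. Second, every component of $P$ is a connected graph and $P$, being a multi-cell, is locally connected; in particular no base-component $V'\neq V_C:=\pi(C)$ can meet a small neighbourhood of $x$, since $x\in\mycl(V')$ would make $V_C\cup V'$ connected and contradict maximality of components. Near $x$ the remaining layers over $V_C$ stay bounded away from $t$, so for $x'$ near $x$ the value $f_C(x')$ is precisely the unique point of the fibre $P_{x'}$ lying in a small band $(t-\epsilon,t+\epsilon)$, a definable selection. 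Replacing $f_C$ by this selection gives a definable local description of $M\cap C^+$ near $z$, and one checks that $z\in\myadh^+(M,P)$ iff there exists $\epsilon>0$ (small enough to force the uniqueness above) for which the corresponding definable set accumulates at $z$. This makes $\myadh^+(M,P)$, and by symmetry $\myadh^-(M,P)$, definable.

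For the nowhere-density statement I focus on $\myadh^+$, the case $\myadh^-$ being symmetric. The key device is to reduce the off-fibre/on-fibre discrepancy to a single definable function. Passing to a relatively open piece of $\pi(M)$ inside one base-component $V_C$ and choosing (by generic continuity) a continuous definable section $f\colon W\to\mathbb R$ with graph in $P$, I introduce $\psi\colon W\to[0,+\infty]$ defined by $\psi(x)=\inf\{s-f(x):s\in M_x,\ s>f(x)\}$. A fibre computation, using that $f$ is locally the component function, yields the two equivalences: $z_x:=(x,f(x))\in\myadh^+(M,P,x)$ iff $\psi(x)=0$, and $z_x\in\myadh^+(M,P)$ iff $\liminf_{x'\to x,\,x'\in\pi(M)}\psi(x')=0$. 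Thus $x$ is \emph{bad} (i.e.\ the two fibres over $x$ disagree) exactly when $\psi(x)>0$ while $\liminf_{x'\to x}\psi(x')=0$.

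Finally I close the argument by genericity. If the bad set were not nowhere dense in $\pi(M)$, it would be dense in some nonempty relatively open $W_0\subseteq\pi(M)$, and after the reductions above I may assume the bad points for the fixed section $f$ are dense in $W_0$. By generic continuity of definable functions in d-minimal structures, $\psi$ is continuous on a dense open $W^*\subseteq W_0$; at every $x\in W^*$ one has $\liminf_{x'\to x}\psi(x')=\psi(x)$, so no $x\in W^*$ can be bad, since badness demands $\psi(x)>0=\liminf_{x'\to x}\psi(x')$. Hence the bad set misses the nonempty open set $W^*$, contradicting its assumed density in $W_0$. Therefore the bad set is nowhere dense in $\pi(M)$. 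I expect the principal difficulties to be exactly the two I flagged: definably dispensing with the non-definable component $C$ (where local connectedness of multi-cells and the separation of layers do the work), and the realization that the entire discrepancy is measured by the single definable gap-function $\psi$, after which d-minimal generic continuity finishes the proof.
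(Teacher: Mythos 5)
Your overall architecture mirrors the paper's: definability of $\myadh^{\pm}(M,P)$ is obtained by eliminating the non-definable component $C$ through a local definable description of $P$ near a point (the paper quantifies over open boxes $B$ for which $B\cap P$ is the graph of a continuous function on $\pi(P)\cap\pi(B)$, which is your ``small band'' selection in different clothing), and the nowhere-density claim is reduced to a definable gap function (your $\psi$ is exactly the paper's $g_C-f_C$) together with almost continuity of definable functions in d-minimal structures. The pointwise sets and the final continuity contradiction are handled the same way in both arguments.

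There is, however, one genuine gap: the sentence ``after the reductions above I may assume the bad points for the fixed section $f$ are dense in $W_0$.'' The bad set is a union, over the connected components $C$ of $P$, of per-component bad sets $\pi(T_C)$, and $P$ may have countably infinitely many components lying over a single base component (its fibres can be infinite discrete sets). A countable union of nowhere dense sets can be dense in an open set, so density of the union in $W_0$ does not let you pass to a single section $f$. The paper bridges this as follows: the bad set $S$ is itself definable (it is $\pi$ of the difference of $\myadh^+(M,P)$ and the definable set $\bigcup_x\myadh^+(M,P,x)$, both available from the first part of the lemma); by Miller's Main Lemma a somewhere dense definable set in a d-minimal structure has nonempty interior; and only then does the Baire category theorem, applied to the countable cover $S=\bigcup_C\pi(T_C)$ of that open set, produce a single component $C$ with $\pi(T_C)$ somewhere dense, after which your argument with $\psi=g_C-f_C$ goes through. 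You have all the ingredients for this repair, but as written the reduction to one section is unjustified. A smaller shared caveat: both your band argument and the paper's box quantification implicitly use that near a point of $P$ the other layers of $P$ do not accumulate; you should at least record where this is used.
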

\begin{proof}
	We have 
	\begin{align*}
	&\myadh^+(M,P)=\{(x,y) \in \mathbb R^{n-1} \times \mathbb R\;|\; (x,y) \in P \wedge (\forall B: \text{open box with }(x,y) \in B \\
	&\quad ((B \cap P \text{ is the graph of a continuous function }f \text{ defined on } \pi(P) \cap \pi(B))\\
	&\quad  \rightarrow \exists (x_0,y_0) \in M \cap B \  (x_0 \in \pi(P) \cap \pi(B) \wedge y_0>f(x_0))))\}.
	\end{align*}
	This implies definability of $\myadh^+(M,P)$.
	$\myadh^-(M,P)$ is definable for a similar reason.
	Definability of $\myadh^+(M,P,x_0)$ and $\myadh^-(M,P,x_0)$ is much easier.
	
	We show that $S:=\{x \in \pi(M)\;|\; \myadh^+(M,P) \cap \pi^{-1}(x) \neq \myadh^+(M,P,x)\}$ is nowhere dense in $\pi(M)$.
	For every $C \in \mycon(P)$, let $f_C:\pi(C) \to \mathbb R$ be the continuous function whose graph is $C$.
	Let $g_C:\pi(C) \to \mathbb R$ be the function given by $g_C(x):=\inf\{t \in \mathbb R\;|\; t>f_C(x),\ (x,t) \in M\}$
	and 
	$$T_C:=\{y\in \myfr(M \cap C^+) \cap P\;|\; f_C(\pi(y)) < g_C(\pi(y))\}.$$
	Observe that $T_C \subseteq C$ and $S=\bigcup_{C \in \mycon(P)}\pi(T_C)$.
	
	Assume for contradiction that $S$ is somewhere dense in $\pi(M)$.
	Since $S$ is definable in the d-minimal structure $\mathfrak R$, $S$ has a nonempty interior in $\pi(M)$ by \cite[Main Lemma]{Miller-dmin}.
	Observe that $P$ has only countably many connected components.
	By the Baire category theorem, there exists $C \in \mycon(P)$ such that $\pi(T_C)$ is somewhere dense in $\pi(M)$.
	There exists a nonempty definable open subset $W$ of $\pi(M)$ such that $W$ is contained in the closure $\mycl_{\pi(M)}(\pi(T_C))$ of $\pi(T_C)$ in $\pi(M)$.
	Since $P$ is a multi-cell, $\pi(C)$ is a connected component of $\pi(P)(=\pi(M))$.
	We have $W \subseteq \pi(C)$ because $\mycl_{\pi(M)}(\pi(T_C)) \subseteq \pi(C)$.
	Let $x \in W$ and $y:=f_C(x) \in C$.
	Recall that every multi-cell is a submanifold.
	Choose a sufficiently small open box $B$ containing the point $y$, then we have $P \cap B=C \cap B$ and $C \cap B$ is the graph of the restriction of $f_C$ to $\pi(C) \cap \pi(B)$.
	$C \cap B$ is definable because $P \cap B$ is so.
	Pick a sufficiently small definable open subset $U$ of $\pi(C)$ so that $U \subseteq \pi(B) \cap W$. 
	Then, $\pi^{-1}(U) \cap C$ is definable and $U \cap \pi(T_C)$ is somewhere dense in $\pi(M)$.
	
	Observe that $g_C|_U$ is definable.
	Since  $U \cap \pi(T_C)$ is definable in $\mathfrak R$, $U \cap \pi(T_C)$ contains a nonempty definable open subset $V$ of $\pi(M)$ for the same reason as above.
	We may assume that $g_C|_V$ is continuous by \cite[8.3 Almost continuity]{Miller-dmin} by shrinking $V$ if necessary. 
	We can easily deduce $\pi(T_C) \cap V=\emptyset$ because $f_C < g_C$ on $V$.
	This is a contradiction.
\end{proof}

We are now ready to describe the statement of our key lemma.

\begin{lemma}[Key Lemma]\label{lem:main_lemma}
A  pseudo-special submanifold $M$ of $\mathbb R^n$ satisfying the following conditions is a multi-cell.
\begin{enumerate}
	\item[(1)] $M$ enjoys property $(\dagger)$.
	\item[(2)] $\pi(M)$ is a multi-cell.
	\item[(3)] $\mycl(M) \cap \pi^{-1}(x)=\mycl(M \cap \pi^{-1}(x))$ for every $x \in \pi(M)$;
	\item[(4)] There exists $N \in \mathbb N$ with $\myrank(\mycl(M \cap \pi^{-1}(x)))=N$ for every $x \in \pi(M)$;
	\item[(5)] If $N>1$, for every $1 \leq i <N$, the definable set $$P_i=\bigcup_{x \in \pi(M)} \myIso_i(\mycl(M) \cap \pi^{-1}(x))$$
	is a multi-cell.
	Observe that this condition implies that $(M,P_i)$ and $(P_j,P_k)$ are well-ordered pairs for $1 \leq i \leq N-1$ and $1 \leq j < k \leq N-1$;
	\item[(6)] $(M,P_i)$ and $(P_j,P_k)$ are extremely well-ordered pairs for $1 \leq i \leq N-1$ and $1 \leq j < k \leq N-1$.
\end{enumerate}
\end{lemma}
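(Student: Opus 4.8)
The plan is to show that every connected component $Y$ of $M$ is the graph of a continuous function defined on a connected component of $\pi(M)$. This is the crucial reduction: since $\pi(M)$ is a multi-cell by hypothesis (2), and since $M$ is pseudo-special — hence locally the graph of a continuous function over $\pi(M)$, because $\pi|_{B\cap M}$ being a homeomorphism onto $\pi(B)\cap\pi(M)$ exhibits $B\cap M$ as such a graph — none of the ``thick'' multi-cell shapes ($\pi(Y)\times\mathbb R$, the half-spaces $y>f$, $y<g$, or the band $f<y<g$) can occur, as each of these meets a fibre in more than a single point. Thus the only admissible form is the graph $Y=\{(x,y)\;|\;y=f(x)\}$, and it suffices to establish, for each component $Y$, that (A) $\pi(Y)$ is a connected component $W$ of $\pi(M)$, and (B) the local homeomorphism $\pi|_Y$ is injective, so that $Y$ is a single-valued graph over $W$. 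I also record the elementary fact, proved by the recursion defining multi-cells, that every connected component of a multi-cell is contractible; in particular $W$ will be simply connected.

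The main device will be the frontier strata $P_1,\dots,P_{N-1}$. By hypothesis (5) each $P_i$ is a multi-cell whose components are connected graphs contained in $\myfr(M)$, and by hypothesis (6) the pairs $(M,P_i)$ and $(P_j,P_k)$ are extremely well-ordered, so that the adherence sets are computed fibrewise: $\myadh^{\pm}(M,P_i)\cap\pi^{-1}(x)=\myadh^{\pm}(M,P_i,x)$ for every $x$, with the exceptional set controlled by Lemma \ref{lem:well_ordered}. Over a fixed component $W$ of $\pi(M)$ I would restrict the graphs of the $P_i$ to $W$; since $P_i\subseteq\myfr(M)$ is disjoint from $M\supseteq Y$, the connected set $Y$ cannot cross any stratum graph and is therefore confined to a single open band cut out fibrewise by consecutive stratum graphs $\underline{C_1}<_W Y<_W\underline{C_2}$ (or lies above the top, or below the bottom, stratum). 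The extremely well-ordered identities are exactly what force this band assignment to be consistent over all of $W$ rather than varying from fibre to fibre.

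The heart of the argument, which I expect to carry out by induction on the uniform rank $N$, is to exclude interior accumulation of $Y$ onto the bounding strata. Concretely, suppose $\mycl(Y)$ met the lower stratum graph $C_1$ over an interior point $a\in W$, say at $b=(a,f_{C_1}(a))$. Since $Y\subseteq C_1^{+}$, this gives $b\in\myfr(M\cap C_1^{+})$, i.e.\ $b\in\myadh^{+}(M,P_i)$; hypothesis (6) then upgrades this to $b\in\myadh^{+}(M,P_i,a)$, forcing $M$ to accumulate onto $b$ from above \emph{within the single fibre} over $a$. Using condition (3) to identify $\mycl(M)\cap\pi^{-1}(a)$ with $\mycl(M\cap\pi^{-1}(a))$ and the rank bound (4), such fibrewise accumulation should be incompatible with the band structure and the lower stratum being a clean multi-cell, yielding a contradiction. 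The same mechanism handles (A): if $\pi(Y)$ were a proper open subset of $W$, then Lemma \ref{lem:easy} (whose boundedness hypothesis is supplied by property $(\dagger)$) produces, over a point of $\myfr(\pi(Y))\cap W$, a frontier point $y\in[C_1,C_2]_W$ of $Y$; translating this into fibrewise adherence via (6) and invoking the nowhere-density of Lemma \ref{lem:well_ordered} shows it cannot genuinely bound the band, so $\pi(Y)$ must exhaust $W$.

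Once $\pi(Y)=W$ and no interior accumulation onto the strata occurs, the band interior excludes every stratum, so any sequence in $Y$ over a convergent base sequence is bounded by $(\dagger)$ and its limit lies in $M$ by condition (3), hence in the relatively closed component $Y$; therefore $\pi|_Y$ is a proper local homeomorphism onto $W$, i.e.\ a covering map. As $W$ is contractible and $Y$ is connected, this covering is trivial of one sheet, so $\pi|_Y$ is a homeomorphism and $Y$ is the graph of a continuous function on $W$, as required. I expect step (A), together with the no-interior-accumulation claim of the third paragraph, to be the main obstacle: it is precisely here that property $(\dagger)$, the fibrewise adherence identities of hypothesis (6), the rank bound (4), and Lemma \ref{lem:easy} must be combined to rule out a component ``stalling'' over an interior point of $W$ by diving onto a lower stratum — the one pathology that would simultaneously destroy both surjectivity and injectivity of $\pi|_Y$.
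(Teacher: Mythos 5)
Your overall target and endgame match the paper's: show each connected component $X$ of $M$ is a graph over a (simply connected) component $V$ of $\pi(M)$ by proving $\pi|_X$ is a covering map of one sheet. But the step you yourself identify as the heart of the argument --- deriving a contradiction from $X$ (or $M$) accumulating onto a stratum graph over an interior point of $\pi(X)$ --- contains a genuine gap, and in fact the asserted contradiction does not exist. By clause (3), $P_i\subseteq\myfr(M)\cap\pi^{-1}(\pi(M))$ consists precisely of points onto which $M$ accumulates \emph{within the fibre}; so when you pass from $b\in\myadh^{+}(M,P_i)$ to $b\in\myadh^{+}(M,P_i,a)$ via clause (6), the conclusion that $M\cap\pi^{-1}(a)$ accumulates onto $b$ from above is not ``incompatible with the band structure and the rank bound'' --- it is exactly the expected behaviour (already for $N=2$ the fibre closure is a discrete set together with its discrete set of limit points). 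Worse, a single connected component $X$ could a priori contain infinitely many of the local graphs stacked above $b$ (joined to one another outside the box), so even $b\in\myfr(X)$ over $a\in\pi(X)$ admits no local refutation. Consequently your later properness claim (``its limit lies in $M$ by condition (3), hence in $Y$'') is unsupported: condition (3) only places the limit in $\mycl(M)$, and excluding $\myfr(M)$ is precisely the point at issue.

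The paper's proof does not exclude such accumulation. Instead, its Claim 2 proves a local structure theorem at every $b\in\myfr(X)\cap\pi^{-1}(\pi(M))$: after shrinking to $V'$, the set $M$ between the stratum graph $Z'$ through $b$ and a nearby component $C_1$ of $M$ decomposes as a countable disjoint union of full graphs over $V'$ (this is where clause (6), Lemma \ref{lem:easy}, and a delicate two-case analysis are spent). This structure is then used not to contradict the existence of $b$, but to prove unique path lifting and homotopy lifting for $\pi|_X$ (Claim 3): a lifted path approaching $b$ would eventually be trapped in one of these graphs, whose closure misses $Z'$, which is the actual contradiction. Surjectivity onto $V$ and injectivity then follow from $(\dagger)$ and the monodromy argument you also sketch. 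So your outer shell is right, but the replacement of the local decomposition of Claim 2 by a direct ``no accumulation'' contradiction is the missing --- and irreplaceable --- idea.
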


Our proof of the key lemma is long.
First we prove three technical lemmas necessary to prove the key lemma.
\begin{lemma}\label{lem:claim1}
	Let $M$ and $N$ be as in Lemma \ref{lem:main_lemma}.
	Let $X \in \mycon(M)$.
	Suppose $N=1$. We have $\myfr (X) \cap \pi^{-1}(\pi(M))=\emptyset$.
\end{lemma}
\begin{proof}
	We prove $\myfr X \cap \pi^{-1}(x)=\emptyset$ for every $x \in \pi(M)$.
	The closure of $M \cap \pi^{-1}(x)$ is discrete because $N=1$.
	This implies that $M \cap \pi^{-1}(x)$ is discrete and closed.
	We have $\myfr (X) \cap \pi^{-1}(x) \subseteq \myfr (M) \cap \pi^{-1}(x)=\myfr(M \cap \pi^{-1}(x))=\emptyset$ by clause (3) in Lemma \ref{lem:main_lemma}.
\end{proof}

\begin{lemma}\label{lem:claim2}
	Let $M$ and $N$ be as in Lemma \ref{lem:main_lemma}.
	Let $X \in \mycon(M)$.
	Suppose $N>1$. For every $b \in \myfr X \cap \pi^{-1}(\pi(M))$, there exist a definable open subset $V$ of $\pi(M)$ and connected graphs $C_1$ and $C_2$ on $V$ such that $b \in C_1$ and one of the following conditions is satisfied:
	\begin{itemize}
		\item $X>_{V}\underline{C_1}$, $\underline{C_2}>_{V}\underline{C_1}$ and every connected component of $X \cap (C_1,C_2)_{V}$ is a connected graph on $V$;
		\item $X<_{V}\underline{C_1}$, $\underline{C_2}<_{V}\underline{C_1}$  and every connected component of $X \cap (C_2,C_1)_{V}$ is a connected graph on $V$;
	\end{itemize}
	In particular, $\pi(b) \in \pi(X)$.
\end{lemma}
\begin{proof}
	Since $X$ is connected, $\pi(X)$ is contained in a connected component of $\pi(M)$, say $W$.
	Put $a:=\pi(b)$.
	Let $c$ be the last coordinate of $b$.
	
	Observe that $b \in \myfr X \cap \pi^{-1}(\pi(M))\subseteq \myfr M \cap \pi^{-1}(\pi(M))$ by using the assumption that $M$ is a submanifold of $\mathbb R^n$.
	By clauses (3-5) in Lemma \ref{lem:main_lemma}, 
	\begin{equation}
		\myfr M \cap \pi^{-1}(\pi(M))=P_1 \cup \ldots \cup P_{N-1} \tag{*} \label{eq:decomp}
	\end{equation}
	and $\pi(P_i)=\pi(M)$ for every $1 \leq i < N$.
	In particular, $W$ is a connected component of $\pi(P_i)$ for every $1 \leq i < N$.
	There exists $1 \leq l <N$ such that $b \in P_l$.
	Set $P:=P_l$ and $Q:=\myfr M \cap \pi^{-1}(\pi(M)) \setminus P$.
	
	Let $Z \in \mycon(P)$ containing $b$.
	$Z$ is a connected graph on $W$ by Lemma \ref{lem:easyeasy}, and we have either $X>_W\underline{Z}$ or $X<_W\underline{Z}$.
	We may assume $X>_W\underline{Z}$ without loss of generality.
	Let $S:=\{s \in \mathbb R\;|\; (a,s) \in Q, \ s>c\}$.
	We consider two separate cases.
	\medskip
	
	\textbf{Case 1.} Suppose $c \in \mycl S$.
	There exists a decreasing sequence $\{s_m\}_{m \in \mathbb N}$ in $S$ converging to $c$.
	Since $(a,s_m) \in Q$, there exists $1 \leq k <N$ such that $k \neq l$ and $P_k$ contains an infinite subset of $\{(a,s_m)\;|\; m \in \mathbb N\}$. 
	Taking a subsequence, we may assume that $(a,s_m) \in P_k$ for all $m \in \mathbb N$.
	Put $R:=P_k$.
	Let $Y_m \in \mycon(R)$ containing the point $(a,s_m)$.
	Observe that $Y_m$ is a connected graph on $W$ by Lemma \ref{lem:easyeasy}.
	We have $f_{Y_m}(x)>(c+s_m)/2$ for every point $x \in W$ sufficiently close to $a$ because $f_{Y_m}$ is continuous.
	Since $b \in \myfr X \cap \pi^{-1}(W)$, we can choose $x \in W$ and $t \in \mathbb R$ such that $(x,t) \in X$, and $x$ and $t$ are sufficiently close to $a$ and $c$, respectively.
	This implies that $(x,t) \in Y_m^-$.
	We obtain $X <_W \underline{Y_m}$.

	Put $Y':=\myfr(\bigcup_{m \in \mathbb N} Y_m) \cap \pi^{-1}(W)$.
	Observe that $b \in Y'$.
	We show $Y' \subseteq Z$.
	Assume for contradiction that there exists $b' \in Y' \setminus Z$.
	Since $b' \in \myfr M \cap \pi^{-1}(\pi(M))$, there exist $1 \leq l' <N$ and $R' \in \mycon(P_{l'})$ with $b' \in R'$ by equality (\ref{eq:decomp}).
	By clause (5) in Lemma \ref{lem:main_lemma} and Lemma \ref{lem:easyeasy}, $R'$ is a connected graph on $W$.
	In particular, there exists $c' \in \mathbb R$ with $(a,c') \in R'$.
	Since $X <_W \underline{Y_m}$ and $\underline{Z} <_{W}X$, we have $\underline{Z} <_W \underline{Y_m}$ for each $m \in \mathbb N$.
	Since $Z \neq R'$, the definition of $b'$ and the fact $b' \in R'$ imply $\underline{R'}>_W\underline{Z}$.
	We have $c'>c$ and $c'<s_m$, which contradicts to $c=\lim_{m \to \infty}s_m$.
	We have shown $Y' \subseteq Z$.
	
	Pick a point $(x,s) \in X$.
	Since $X >_W \underline{Z}$, there exists $s>u$ with $(x,u) \in Z$.
	Since $X <_W \underline{Y_m}$, for every $m \in \mathbb N$, there exists $u_m>s$ such that $(x,u_m) \in Y_m$.
	We have $(x, \inf\{u_m\;|\; m \in \mathbb N\}) \in Y' \setminus Z$, which is absurd.
	\medskip
	
	\textbf{Case 2.} Next suppose $c \notin \mycl(S)$.
	We want to show $b \notin \bigcup_{i=1}^{N-1} \mycl(P_i \cap Z^+).$
	Since $c \notin \mycl(S)$, $c \notin \myadh^+(P_i,P,a)$ for all $1 \leq i <l$.
	We have $b \notin \myadh^+(P_i,P)$ because the pair $(P_i,P)$ is an extremely well-ordered pair by clause (6) of Lemma \ref{lem:main_lemma}.
	This implies that $b \notin \myfr(P_i \cap Z^+)$ for $1 \leq i <l$ because $b \in Z$.
	
	Put $c':=\inf\{z >c\;|\; (a,z) \in \bigcup_{i \geq l}P_i\}$.
	If $c'=\infty$, we have $P_i \cap Z^+=\emptyset$ for $i \geq l$.
	If $c'<\infty$, there exist $l \leq l' <N$ and a connected component $Z'$ of $P_{l'}$ such that $(a,c') \in Z'$.
	We have $\pi(Z')=W$ because $P_{l'}$ is a multi-cell.
	For every $l \leq i <N$, every connected component $D$ of $P_i$ with $\pi(D)=W$ is a connected graph by Lemma \ref{lem:easyeasy} and clause (5) of Lemma \ref{lem:main_lemma}.
	We have either $\underline{D}<_W \underline{Z}$ or $\underline{D} >_W \underline{Z}$ if $D \neq Z$.
	We have $\underline{D}<_W \underline{Z'}$ or $\underline{D} >_W \underline{Z'}$ if $D \neq Z'$.
	Therefore, $D \cap (Z,Z')_W=\emptyset$ by the definition of $c'$.
	This implies that $(Z,Z')_W \cap P_i=\emptyset$ for every $i \geq l$.
	Consequently, we have $b \notin \bigcup_{i=1}^{N-1} \mycl(P_i \cap Z^+)$.
	
	We have $\pi^{-1}(W) \cap \myfr(M) \cap Z^+ = \bigcup_{i=1}^{N-1} \pi^{-1}(W) \cap P_i \cap Z^+ \subseteq \bigcup_{i=1}^{N-1} \mycl(P_i \cap Z^+)$.
	Therefore, there exists a bounded open box $B$ containing $b$ with 
	\begin{center}
		$\pi(B) \cap \pi(M) \subseteq W$ and $B \cap \myfr(M) \cap Z^+=\emptyset$
	\end{center}
	because $b \notin \bigcup_{i=1}^{N-1} \mycl(P_i \cap Z^+)$.
	Let $I$ be an open interval with $B=\pi(B) \times I$.
	
	Since $X>_W\underline{Z}$ and $b \in \myfr(X) \cap Z$, we have $b \in \myfr(M \cap Z^+)$.
	This means $b \in \myadh^+(M,P)$.
	We have $b \in \myadh^+(M,P,a)$ because $(M,P)$ is an extremely well-ordered pair by clause (6).
	This implies that the closure of $D:=\{t>c\;|\; (a,t) \in M \cap B\}$ contains $c$.
	Clauses (3,4) imply that $D$ has an empty interior. 
	Take $c_1 \in D$.
	Put $c_m:=\sup\{t \in D\;|\; t<c_{m-1}\}$ for $m>1$.
	Since $B \cap \myfr(M) \cap Z^+=\emptyset$, we have $(a,c_m) \in M$.
	Observe that $M \cap (\{a\} \times (c_m,c_{m-1}))=\emptyset$ for $m>1$.
	We also have $\lim_{m \to \infty}c_m=c$.
	If not, $\lim_{m \to \infty}c_m \in M$ for the same reason as above.
	On the other hand,  because $M$ is pseudo-special, there exists an open interval $J$ containing $\lim_{m \to \infty}c_m$ with $(\{a\} \times J) \cap M=\{(a,\lim_{m \to \infty}c_m)\}$, which is absurd.
	
	Let $D'_1 \in \mycon(M)$ with $(a,c_1) \in D'_1$.
	Since $M$ is pseudo-special, there exists an open box $B'$ in $\mathbb R^n$ such that $(a,c_1) \in B'$ and $B' \cap D'_1$ is the graph of a continuous function defined on $\pi(B') \cap \pi(M)$.
	Shrinking $B'$ if necessary, we may assume that $\pi(B') \cap \pi(M)=\pi(B') \cap \pi(D'_1)$.
	Put 
	\begin{center}
		$V:=\pi(B') \cap \pi(D'_1)$ and $C_1:=Z \cap \pi^{-1}(V)$.
	\end{center}
	
	Let $D_m \in \mycon(M \cap (V \times I))$ containing $(a,c_m)$ for $m \in \mathbb N$.
	Put 
	\begin{center}
		$C_2:=D_1$.
	\end{center}
	We prove $$\pi(D_m)=V$$ by induction on $m$.
	Observe that $D_1$ is the graph of a continuous function on $V$.
	Therefore, we have $\pi(D_1)=V$.
	Suppose $m>1$.
	By the induction hypothesis, $D_{m-1}$ is the graph of a continuous function $g_{m-1}$ defined on $V$.
	Since $c_m<c_{m-1}$, $D_m$ is connected and $D_m \cap D_{m-1}=\emptyset$, we have $D_m<_{V}\underline{D_{m-1}}$.
	Assume for contradiction that $\pi(D_m) \neq V$.
	Observe that $$V_m:=\pi(D_m)$$ is open in $V$ because $M$ is pseudo-special.
	Since $\underline{C_1} <_{V} D_m<_{V}\underline{D_{m-1}}$, by Lemma \ref{lem:easy}, $\myfr(V_m) \cap  V \neq \emptyset$ and, for every $x' \in \myfr(V_m) \cap  V$, there exists $y' \in \myfr(D_m) \cap [C_1,D_{m-1}]_{V}$ with $\pi(y')=x'$.
	
	Pick $x' \in \myfr(V_m) \cap  V$.
	Let $c'$ be the last coordinate of $y'$.
	We have $f_{C_1}(x') \leq c' \leq g_{m-1}(x')$.
	In particular, $y'=(x',c') \in V \times I$.
	We have $y' \in \myfr(M)$ because $D_m \in \mycon(M \cap (V \times I))$ and $M$ is pseudo-special.
	We have $c'=f_{C_1}(x')$ because $V \times I \subseteq B$ and $B \cap \myfr(M) \cap Z^+=\emptyset$.
	In other words, $y' \in C_1$.
	Observe that $y' \in \mycl(M \cap Z^+)$ because $y' \in \myfr(D_m)$ and $\underline{C_1} <_{V} D_m$.
	These imply $y' \in \myadh^+(M,P)$.
	By clause (5), we have $y' \in \myadh^+(M,P,x')$.
	This deduces that there exists a point $d' \in \mathbb R$ with $(x',d') \in M$ and $c'<d'<g_{m-1}(x')$.
	Let $C \in \mycon(M \cap (V \times I))$ with $(x',d') \in C$.
	Since $M$ is pseudo-special, if we choose a sufficiently small open box $B''$ containing $(x',d')$, $M \cap B''$ is the graph of a continuous function defined on the submanifold $\pi(M) \cap \pi(B'')$.
	We may assume that $\pi(M) \cap \pi(B'')$ is connected by shrinking $B''$ if necessary.
	Under this condition, we have $M \cap B''=C \cap B''$ and $\pi(M) \cap \pi(B'') = \pi(C) \cap \pi(B'')$.
	Since $\pi(C) \cap \pi(B'')$ is a neighborhood of $x'$ in $\pi(M)$, $C \cap \pi^{-1}(V_m)$ is not empty because $x' \in \myfr(V_m)$.
	Pick $C' \in \mycon(C \cap \pi^{-1}(V_m))$.
	$\pi(C')$ is open in $V_m$ because $M$ is pseudo-special.
	We have $\underline{D_m}<_{V'_m}C'<_{V_m}\underline{(D_{m-1}\cap \pi^{-1}(V_m))}$.
	We also have $\pi(C') \neq V_m$ because $M \cap (\{a\} \times (c_m,c_{m-1}))=\emptyset$.
	By Lemma \ref{lem:easy}, $\myfr(C') \cap [D_m,D_{m-1}\cap \pi^{-1}(V_m)]_{V_m} \neq \emptyset$. 
	Pick $z \in \myfr(C') \cap [D_m,D_{m-1}\cap \pi^{-1}(V_m)]_{V_m}$.
	Since $C'$ is a connected component of $M \cap (V_m \times I)$ and $M$ is pseudo-special, $z \in \myfr(M)$.
	This implies that $B \cap \myfr(M) \cap Z^+ \neq \emptyset$, which is absurd.
	We have shown $\pi(D_m)=V$ for all $m \in \mathbb N$.
	
	Next we show $$M \cap (C_1,C_2)_{V}= \bigcup_{m>1}D_m.$$
	Let $z \in M \cap (C_1,C_2)_{V} \setminus(\bigcup_{m>1}D_m)$ for contradiction.
	Put $x_z:=\pi(z)$ and $c_z$ be the last coordinate of $z$.
	For $m \in \mathbb N$, there exists $d_m \in \mathbb R$ such that $(x_z,d_m) \in D_m$ because $\pi(D_m)=V$.
	We have $\inf \{d_m\;|\;m \in \mathbb N\}=f_{C_1}(x')$ because $B \cap \myfr(M) \cap Z^+=\emptyset$ and $M$ is pseudo-special.
	We can choose $m >1$ such that $d_m < c_z < d_1$.
	Let $C'' \in \mycon(M \cap (C_1,C_2)_{V})$ with $z \in C''$.
	We have $\pi(C'') \neq V$ because $M \cap (\{a\} \times (c,c_1))=\{c_m\;|\;m>1\}$.
	By Lemma \ref{lem:easy}, we have $\myfr(C'') \cap [D_m,C_2]_{V} \neq \emptyset$, which contradicts $B \cap \myfr(M) \cap Z^+=\emptyset$.
	
	Since $M \cap (C_1,C_2)_{V}= \bigcup_{m>1}D_m$, every connected component of $X \cap (C_1,C_2)_{V}$ coincides with one of $D_m$'s.
	Therefore, every connected component of $X \cap (C_1,C_2)_{V}$ is a connected graph on $V$.
	The `in particular' part of this lemma is obvious.
\end{proof}

\begin{lemma}\label{lem:claim3}
	Let $M$ and $N$ be as in Lemma \ref{lem:main_lemma}.
	Let $X \in \mycon(M)$.
	Let $V$ be a connected component of $\pi(M)$ containing $\pi(X)$.
	Let $b \in X$ and put $a=\pi(b)$.
	\begin{enumerate}
		\item[(a)] For every continuous map $\alpha:[0,1] \to V$ with $\alpha(0)=a$, there exists a unique continuous map $\beta:[0,1] \to X$ such that $\alpha=\pi \circ \beta$ and $\beta(0)=b$.
		\item[(b)] Let $\beta_i:[0,1] \to X$ be continuous maps with $\beta_i(0)= b$ for $i=0,1$ and let $H:[0,1] \times [0,1] \to V$ be a homotopy with $H(s,i)=\pi(\beta_i(s))$ for $i=0,1$ and $s \in [0,1]$, and $H(0,t)=a$ for $t \in [0,1]$. 
		Then, there exists a unique homotopy $\widetilde{H}:[0,1] \times [0,1] \to X$ between $\beta_0$ and $\beta_1$ with $\widetilde{H}(0,t)=b$ for $t \in [0,1]$. 
	\end{enumerate}
\end{lemma}
\begin{proof}
	(a) 
	First we show uniqueness.
	Let $\beta_1,\beta_2:[0,1] \to X$ be continuous maps with $\alpha=\pi \circ \beta_i$ and $\beta_i(0)=b$ for $i=1,2$.
	Let $I:=\{t \in [0,1]\;|\; \beta_1(t)=\beta_2(t)\}$, which is closed in $[0,1]$.
	We show that $I$ is open in $[0,1]$.
	Let $t_0 \in I$ be an arbitrary point.
	Put $y_0:=\beta_1(t_0)=\beta_2(t_0)$.
	Since $M$ is a pseudo-special submanifold, there exists an open box $B$ containing $y_0$ such that $M \cap B$ is the graph of a continuous function $f$ defined on $\pi(B) \cap \pi(M)$.
	We have $\beta_1(t)=(\alpha(t),f (\alpha(t)))=\beta_2(t)$ for every $t \in [0,1]$ sufficiently close to $t_0$.
	This means that $I$ is open in $[0,1]$.
	We have $I=[0,1]$ because $[0,1]$ is connected.
	
	Next we show existence.
	Consider the set $J$ of $t \in [0,1]$ such that there exists a continuous map $\beta_t:[0,t] \to X$ with $\beta_t(0)=a$ and $\pi \circ \beta_t=\alpha|_{[0,t]}$.
	$J$ is open because $M$ is pseudo-special.
	Let $t_0:=\sup J$.
	For every $0 \leq t <t_0$, there exists a unique continuous map $\beta_t:[0,t] \to X$ satisfying the above conditions.
	The map $\beta:[0,t_0) \to X$ defined by $\beta(t):=\beta_t(t)$ is a continuous map by uniqueness of $\beta_t$.
	Let $(s_m)_{m \in \mathbb N}$ be an increasing sequence in $[0,t_0)$ converging to $t_0$.
	By clause (1) of Lemma \ref{lem:main_lemma}, $M$ enjoys property $(\dagger)$.
	By property $(\dagger)$, we may assume that $(\beta(s_m))_{m \in \mathbb N}$ is a convergent sequence by taking a subsequence. 
	Put $c=\lim_{m \to \infty}\beta(s_m)$.
	We have $\pi(c)=\alpha(t_0) \in \pi(M)$.
	Therefore, we obtain $c \in \mycl(X) \cap \pi^{-1}(\pi(M))$.
	
	Assume for contradiction that $c \in \myfr(X)$.
	If $N=1$, this contradicts Lemma \ref{lem:claim1}.
	Suppose $N>1$.
	By Lemma \ref{lem:claim2}, there exist a definable open subset $V'$ of $\pi(M)$ and connected graphs $C_1$ and $C_2$ on $V'$ such that $c \in C_1$ and one of the following conditions is satisfied:
	\begin{enumerate}
		\item[(i)] $X>_{V'}\underline{C_1}$, $\underline{C_2}>_{V'}\underline{C_1}$ and every connected component of $X \cap (C_1,C_2)_{V'}$ is a connected graph on $V'$;
		\item[(ii)] $X<_{V'}\underline{Z'}$, $\underline{C_2}<_{V'}\underline{C_1}$ and every connected component of $X \cap (C_2,C_1)_{V'}$ is a connected graph on $V'$;
	\end{enumerate}
	We may assume that condition (i) is satisfied without loss of generality.
	By taking $u_0<t_0$ sufficiently close to $t_0$, we may assume that $\alpha([u_0,t_0])$ is contained in $V'$.
	By taking a subsequence of $(s_m)_{m \in \mathbb N}$, we may assume that $\beta(s_m) \in X \cap (C_1,C_2)_{V'}$ and $u_0<s_m<t_0$ for every $m \in \mathbb N$.
	By using the assumption that $M$ is a pseudo-special submanifold and condition (i), we can easily prove that $K:=\{t \in [s_1,t_0)\;|\;\beta(t) \in X \cap (C_1,C_2)_{V'}\}$ is closed and open in $[s_1,t_0)$.
	We omit the details.
	Since $s_1 \in K$, we have $K=[s_1,t_0)$.
	This means that $\beta([s_1,t_0)) \subseteq X \cap (C_1,C_2)_{V'}$.
	Since $\beta([s_1,t_0))$ is connected, it is contained in a connected component of $X \cap (C_1,C_2)_{V'}$, say $D$.
	However, $D$ is a connected graph on $V'$ with $C_1<_{V'}D$ by condition (i).
	We have $c \in \mycl(D)$, but $c \in C_1$ and $C_1 \cap \mycl(D)=\emptyset$, which is absurd. 
	
	We have $c \in X$.
	This implies $t_0 \in J$.
	Because $J$ is open in $[0,1]$, we get $t_0=1$.
	
	(b) Uniqueness follows from clause (a).
	
	By clause (a), for every $0 \leq s \leq 1$, there exists a unique continuous map $\beta[s]:[0,1] \to V$ such that $\beta[s](0)=b$ and $H(s,t)=\pi(\beta[s](t))$ for $0 \leq t \leq 1$.
	Put $\widetilde{H}(s,t):=\beta[s](t)$.
	It is obvious that $\widetilde{H}$ satisfies the conditions in clause (b) except continuity.
	The remaining task is to show that $\widetilde{H}$ is continuous.
	Let $$K:=\{s \in [0,1]\;|\; \widetilde{H}|_{[0,s] \times [0,1]} \text{ is continuous}\}.$$
	Assume for contradiction that $s_0:=\sup K < 1$.
	Since $M$ is pseudo-special and uniqueness of $\widetilde{H}$, for every $0 \leq t \leq 1$, there exist $\delta_t>s_0$ and $0 \leq \varepsilon_{1,t} <t < \varepsilon_{2,t} \leq 1$ such that the restriction of $\widetilde{H}$  to $[0,\delta_t] \times [\varepsilon_{1,t},\varepsilon_{2,t}]$ is continuous.
	Since $[0,1]$ is compact, there exists $\delta>s_0$ such that the restriction of $\widetilde{H}$  to $[0,\delta] \times [0,1]$ is continuous, which is absurd.
\end{proof}

Now, we are ready to prove Lemma \ref{lem:main_lemma}.

\begin{proof}[Proof of Lemma \ref{lem:main_lemma}]
	Let $X \in \mycon(M)$.
	Since $X$ is connected, $\pi(X)$ is contained in a connected component of $\pi(M)$, say $V$.
	Observe that $V$ is simply connected by clause (2) and the definition of multi-cells.
	We have only to show that $\pi|_X$ is a homeomorphism onto $V$ by clause (2).
	Observe that $\pi|_X$ is a local homeomorphism because $\mathcal M$ is pseudo-special.

	 We show $\pi(X)=V$.
 	Since $\pi|_X$ is a local homeomorphism, $\pi(X)$ is open in $V$.
 	Assume for contradiction that $\pi(X)$ is not closed in $V$.
 	We can find $a \in \myfr(\pi(X)) \cap V$.
 	Let $(a_m)_{m \in \mathbb N}$ be a convergent sequence in $\pi(X)$ converging to $a$.
 	Choose $b_m \in X$ with $\pi(b_m)=a_m$.
 	Since $M$ enjoys property $(\dagger)$, we may assume that $(b_m)_{m \in \mathbb N}$ is convergent in $\mathbb R^n$ by taking a subsequence.
 	Let $b:=\lim_{m \to \infty} b_m$.
 	We have $\pi(b)=a \notin \pi(X)$ and $b \notin X$.
 	This implies that $b \in \myfr X \cap \pi^{-1}(\pi(M))$ and $\pi(b) \notin \pi(X)$.
 	These contradict Lemma \ref{lem:claim1} if $N=1$ and `in particular' part of Lemma \ref{lem:claim2} if $N>1$.
	 
	 The remaining task is to show $\pi|_X$ is injective.
	 Let $y_1,y_2 \in X$ with $x_0:=\pi(y_1)=\pi(y_2)$.
	 Observe that $X$ is a connected submanifold of $\mathbb R^n$; therefore, $X$ is pathwise connected.
	 Let $\gamma:[0,1] \to X$ be a continuous map connecting $y_1$ and $y_2$.
	 Put $\beta=\pi \circ \gamma$.
	 If $\gamma$ is a constant map, we have $y_1=y_2$.
	 Therefore, we may assume that $\gamma$ is not a constant map.
	 Observe that $\beta$ is not a constant map because $M$ is pseudo-special.
	 Since $V$ is simply connected, there exists a homotopy $H: [0,1] \times [0,1] \to V$ such that $H(0,t)=\beta(t)$ and $H(1,t)=x_0$.
	 By Lemma \ref{lem:claim3}, there exists a homotopy $\widetilde{H}: [0,1] \times [0,1] \to X$ such that $\widetilde{H}(0,t)=\gamma(t)$ and $\widetilde{H}(1,t)=y_1$.
	 We have $y_2=\gamma(1)=\widetilde{H}(1,1)=y_1$.
\end{proof}

Finally, we prove Theorem \ref{thm:decomposition_into_multicells}.

\begin{proof}[Proof of Theorem \ref{thm:decomposition_into_multicells}]
We first reduce to the case where $k=1$.
Suppose that Theorem \ref{thm:decomposition_into_multicells} holds for $k=1$.
Put $A_i(0):=A_i$ and $A_i(1)= \mathbb R^n \setminus A_i$ for $ 0 \leq i \leq k$.
Let $\mathcal S$ be the set of sequences $\overline{s}$ of zeros and ones of length $k$ with $\overline{s} \neq (1,\ldots,1)$.
Set $A_{\overline{s}}:= \bigcap_{i=1}^k A_i(s_i)$ for $\overline{s}=(s_1,\ldots, s_k) \in \mathcal S$.
Since we assume that Theorem \ref{thm:decomposition_into_multicells} holds for $k=1$, for every $\overline{s} \in \mathcal S$, $A_{\overline{s}}$ is partitioned into finitely many multi-cells, say, $C_{\overline{s},1}, \ldots, C_{\overline{s},l(\overline{s})}$.
If $n=1$, $\{C_{\overline{s},i}\;|\; 1 \leq i \leq l(\overline{s}), \overline{s} \in \mathcal S\}$ is a desired multi-cell decomposition.
If $n>1$, there exists a multi-cell decomposition partitioning $\{\pi(C_{\overline{s},i})\;|\; 1 \leq i \leq l(\overline{s}), \overline{s} \in \mathcal S\}$, say $D_1,\ldots, D_m$.
Let $\mathcal D$ be the collection of nonempty definable sets of the form $C_{\overline{s},i} \cap D_j$ for some $\overline{s} \in \mathcal S$, $1 \leq i \leq l(\overline{s})$ and $1 \leq j \leq m$.
$\mathcal D$ is a desired multi-cell decomposition.
Therefore, we may assume that $k=1$.
We denote $A_1$ by $M$ for simplicity.
We use this reduction process in this proof without notice.

We show this theorem by induction on $n$.
Suppose $n=1$.
Let $J=\myint(M)$. 
Let $N=\myrank(\mycl(M \setminus J))$ and set $S_i:=\myIso_i(\mycl(M \setminus J)) \cap (M \setminus J)$ for $0 \leq i \leq N-1$.
It is easy to show that $\{J, S_0,\ldots, S_{N-1}\}$ is a multi-cell decomposition of $M$ after removing empty sets in the collection.
We omit the details.

Suppose $n>1$.
By the induction hypothesis, we can easily reduce to the case where $\pi(M)$ is a multi-cell.
This reduction process is also used without notice.

First we consider a special case.
Suppose that $\dim M \cap \pi^{-1}(x) =0$ for every $x \in \pi(M)$.
We reduce to simpler cases in a step-by-step manner.
Set $$N:=\max\{\myrank(\mycl(M \cap \pi^{-1}(x)))\;|\; x \in \pi(M)\}.$$
Set $A':=\bigcup_{x \in \pi(M)} \{x\} \times \myIso( \mycl(M \cap \pi^{-1}(x)))$.
By induction on $N$, by considering $A'$ instead of $M$, we may assume that $M \cap \pi^{-1}(x)$ is discrete for every $x \in \pi(M)$.

Put $M[i]:=M \cap \pi^{-1}(\{x \in \pi(M)\;|\;\myrank(\mycl(M \cap \pi^{-1}(x)))=i\})$ for $1 \leq i \leq N$.
We may assume that $\myrank(\mycl(M \cap \pi^{-1}(x)))$ is independent of $x \in \pi(M)$, say $N$, by considering $M[i]$ instead of $M$.
We define $P_k$ for $1 \leq k \leq N-1$ as in clause (5) of Lemma \ref{lem:main_lemma}.
Put $P_0:=M$ for simplicity of notation.
We prove the following claim which implies a multi-cell decomposition partitioning $M$.
\medskip

\textbf{Claim 1.} There exists a multi-cell decomposition $\{C_1, \ldots, C_l\}$ partitioning $\pi(M)$ such that, for $1 \leq i \leq l$,  $P_i \cap \pi^{-1}(C_j)$ is a multi-cell for $0 \leq i \leq N-1$ and clause (6) in Lemma \ref{lem:main_lemma} is satisfied for $M \cap \pi^{-1}(C_i)$.

\begin{proof}[Proof of Claim 1]
	We prove this claim by induction on $N$.
	First we reduce to a simpler case.
	There exists a multi-cell decomposition partitioning $\pi(M)$, say $D_1, \ldots, D_m$  by induction hypothesis.
	By considering $M \cap \pi^{-1}(D_i)$ instead of $M$, we may assume that $\pi(M)$ is a multi-cell in $\mathbb R^{n-1}$ of dimension $d$.
	Therefore, clause (2) of Lemma \ref{lem:main_lemma} is always satisfied.
	
	Let $\myreg(M)$ be the set of points $x \in M$ such that there exists an open box $B$ in $\mathbb R^n$ containing $x$ such that $\pi|_{B \cap M}$ is a homeomorphism onto $\pi(B) \cap \pi(M)$.
	Assume for contradiction, there exists a nonempty definable open subset $U$ of $\pi(M)$ contained in $\pi(M \setminus \myreg(M))$.
	Since $M \cap \pi^{-1}(x)$ is discrete for $x \in U$, by \cite{Miller-choice}, there exist definable functions $f,g_1,g_2:U \to \mathbb R$ such that, for every $x \in U$, $(x,f(x)) \in M \setminus \myreg(M)$ and $M \cap (\{x\} \times (g_1(x),g_2(x))) =\{(x,f(x))\}$.
	By \cite[8.3 Almost continuity]{Miller-dmin}, we may assume that $f,g_1,g_2$ are continuous by shrinking $U$ if necessary.
	This contradicts that $f(x) \notin \myreg(M)$.
	Put $S_1:=\pi(M \setminus \myreg(M))$, which is nowhere dense in $\pi(M)$.
	
	Put $S_2:=\{x \in \pi(M)\;|\; \mycl(M) \cap \pi^{-1}(x) \neq \mycl(M \cap \pi^{-1}(x))\}$, which is nowhere dense in $\pi(M)$ by \cite[Main Lemma(5)]{Miller-dmin}.
	By induction on $d$, we may assume that $M$ is pseudo-special and clauses (3,4) in Lemma \ref{lem:main_lemma} are satisfied by considering $M \setminus \pi^{-1}(\mycl(S_1 \cup S_2))$ instead of $M$. 
	Observe that clause (1) of Lemma \ref{lem:main_lemma} always holds in both cases (a) and (b) of this theorem.
		
	First suppose $N=1$.
	We have $\myrank(M \cap \pi^{-1}(x))=1$ for each $x \in \pi(M)$.
	By the induction hypothesis, there exists a multi-cell decomposition $\{D_1, \ldots, D_m\}$ of $\pi(M)$.
	We may assume that $\pi(M)$ is a multi-cell by considering $M \cap \pi^{-1}(D_i)$ instead of $M$.
	Observe that clauses (5,6) in Lemma \ref{lem:main_lemma} impose no conditions in this case.
	$M$ is a multi-cell by Lemma \ref{lem:main_lemma}.
		
	Suppose $N>1$.
	Apply the induction hypothesis to $P_1$, then we may assume that $P_1, \ldots, P_{N-1}$ are multi-cells and $(P_i,P_j)$ are extremely well-ordered pairs for $1 \leq i < j \leq N-1$.
	Observe that $\pi(M)$ is a multi-cell because $\pi(M)=\pi(P_1)$ and $P_1$ is a multi-cell.
	We want to reduce to the case where $(M,P_i)$ is an extremely well-ordered pair for every $1 \leq i \leq N-1$.
	Put $T_{1i}:=\{x \in \pi(M)\;|\; \myadh^+(M,P_i) \cap \pi^{-1}(x) \neq \myadh^+(M,P_i,x)\}$ and $T_{2i}:=\{x \in \pi(M)\;|\; \myadh^-(M,P_i) \cap \pi^{-1}(x) \neq \myadh^-(M,P_i,x)\}$.
	Put $T:=\bigcup_{i=1}^{N-1} T_{1i} \cup T_{2i}$.
	$T$ is nowhere dense by Lemma \ref{lem:well_ordered}.
	
	We treat this case by induction on $d$.
	If $d=0$, then $T=\emptyset$.
	$M$ is a multi-cell by Lemma \ref{lem:main_lemma}.
	If $d>0$, the claim holds for $M \cap \pi^{-1}(T)$ because $\dim T<d$.
	Let $\{D_1,\ldots, D_l\}$ be a multi-cell decomposition of $\pi(M) \setminus T$.
	By considering $M \cap \pi^{-1}(D_i)$ instead of $M$, we may assume that $T=\emptyset$ and $\pi(M)$ is a multi-cell.
	We have succeeded in reducing to the case where $M$ is a pseudo-special submanifold enjoying clauses (1) through (6) in Lemma \ref{lem:main_lemma}.
	Lemma \ref{lem:main_lemma} implies that $M$ is a multi-cell.
\end{proof}
We have proved Theorem \ref{thm:decomposition_into_multicells} for the case in which $\dim M \cap \pi^{-1}(x) =0$ for every $x \in \pi(M)$.
We treat the general case.

For every set $S \subseteq \mathbb R^n$, put $S_x:=\{y \in \mathbb R\;|\; (x,y) \in S\}.$
Consider the set $J=\bigcup_{x \in \pi(M)} \{x\} \times \myint(M_x)$.
By Claim 1, there exists a multi-cell decomposition partitioning $M \setminus J$.
By considering $J$ instead of $M$, we may assume that $M_x$ is open for $x \in \pi(M)$.
Consider the following sets:
\begin{align*}
	&V_{+\infty}:=\{(x,y) \in M\;|\;\ \forall t \ (t>y \rightarrow (x,t) \in M) \};\\
	&V_{-\infty}:=\{(x,y) \in M\;|\;\ \forall t \ (t<y \rightarrow (x,t) \in M) \};\\
	&V_{\mathbb R}:=V_{+\infty} \cap V_{-\infty};\\
	&W:=M \setminus (V_{+\infty} \cup V_{-\infty}).
\end{align*}
A multi-cell decomposition partitioning $\{V_{+\infty},V_{+\infty},V_{\mathbb R},W\}$ is a multi-cell decomposition partitioning $M$.
Therefore, we have only to construct multi-cell decompositions partitioning each of $V_{+\infty}$, $V_{+\infty}$, $V_{\mathbb R}$ and $W$.
Observe that $V_{\mathbb R}=\pi(V_{\mathbb R}) \times \mathbb R$.
It is easy to show that $V_{\mathbb R}$ is a union of multi-cells.
We omit the details.
We next show that there exists a multi-cell decomposition partitioning $V_{+\infty}$.
By considering a multi-cell decomposition of $\pi(V_{+\infty})$, we may reduce to the case where $\pi(V_{+\infty})$ is a multi-cell.
Consider the definable function $f:\pi(V_{+\infty}) \to \mathbb R$ given by $ f(x)=\inf\{t \in \mathbb R\;|\; (x,t) \in V_{+\infty}\}$.
Let $D$ be the set of points at which $f$ is discontinuous. 
By \cite[8.3 Almost continuity]{Miller-dmin}, we have $\dim D < \dim \pi(V_{+\infty})$.
We can reduce to the case in which $f$ is continuous by induction on $\dim \pi(V_{+\infty})$.
$V_{+\infty}$ is a multi-cell because $\pi(V_{+\infty})$ is a multi-cell and $V_{+\infty}=\{(x,t) \in \pi(V_{+\infty})\times \mathbb R\;|\; t>f(x)\}$.
We can prove that there exists a multi-cell decomposition partitioning $V_{-\infty}$ in the same manner.

The remaining task is to construct a multi-cell decomposition partitioning $W$.
Observe that every connected component of $W_x$ is a bounded open interval for every $x \in \pi(W)$.
As usual, we can reduce to the case in which $\pi(W)$ is a multi-cell by the induction hypothesis. 
Consider the definable set
\begin{align*}
&Z:=\{(x,y_1,y_2) \in \pi(W) \times \mathbb R \times \mathbb R\;|\; (x,y_1) \notin W, (x,y_2) \notin W, \\
&\qquad y_1<y_2, (\forall t \ y_1<t<y_2 \rightarrow (x,t) \in W)\}.
\end{align*}
Let $Q:=\{(x,y) \in \pi(W) \times \mathbb R\;|\; \exists z\ (x,y,z) \in Z\}$.
Observe that $\pi(W)=\pi(Q)$ and $\dim Q \cap \pi^{-1}(x)=0$ for every $x \in \pi(Q)$.
By Claim 1, there exists a multi-cell decomposition $\{Q_1,\ldots, Q_l\}$ of $Q$.
Observe that, for every $(x,y) \in Q$, there exists a unique $z \in \mathbb R$ with $(x,y,z) \in Z$.
Let $g:Q \to \mathbb R$ be the definable function sending $(x,y) \in Q$ to the unique point $z \in \mathbb R$ with $(x,y,z) \in Z$.
Put $g_i:=g|_{Q_i}$ for $1 \leq i \leq l$.
By \cite[8.3 Almost continuity]{Miller-dmin}, the set $G_i$ of points at which $g_i$ is discontinuous is nowhere dense in $Q_i$.
For every $C \in \mycon(Q_i)$, $C$ is the graph of a continuous function defined on a connected component of $\pi(Q_i)$.
Therefore, $\pi(G_i \cap C)$ is nowhere dense in $\pi(Q_i)$ for every $C \in \mycon(Q_i)$. 
Since $\mycon(Q_i)$ is a countable set, $\pi(G_i)=\bigcup_{C \in \mycon(Q_i)}\pi(G_i \cap C)$ has an empty interior in $\pi(Q_i)$ by the Baire category theorem.
Since $\pi(G_i)$ is definable in the d-minimal structure $\mathfrak R$, $\dim \pi(G_i)<\dim \pi(Q_i)$.
By induction on $\dim(\pi(Q_i))$, using the induction hypothesis, there exists a multi-cell decomposition $D_1, \ldots, D_m$ of $\pi(Q)$ such that $Q \cap \pi^{-1}(D_i)$ is a multi-cell and the restriction of $g$ to $Q \cap \pi^{-1}(D_i)$ is continuous.
Since $Q \cap \pi^{-1}(D_i)$ is a multi-cell, every connected component of $W \cap \pi^{-1}(D_i)$ is of the form $\{(x,y) \in D_i \times \mathbb R\;|\; f(x)<y<g(x,f(x))\}$, where $f$ is a continuous function defined on $D_i$.
This means that $W \cap \pi^{-1}(D_i)$ is a multi-cell.
\end{proof}

\section{A generalized version of Theorem \ref{thm:connected_expansion}}\label{sec:construction}

Theorem \ref{thm:connected_expansion} immediately follows from the following theorem:
\begin{theorem}\label{thm:natural}
	Let $\mathfrak R$ be a d-minimal expansion $(\mathbb R,<,+,\ldots)$ of the ordered group of reals which satisfies one of the following conditions:
	\begin{enumerate}
		\item[(a)] There exists a definable homeomorphism between $\mathbb R$ and a bounded open interval;
		\item[(b)] For every $n \in \mathbb N$ and every pseudo-special submanifold $M$ of $\mathbb R^n$, property $(\dagger)$ is fulfilled.
	\end{enumerate}
	A subset $X$ of $\mathbb R^n$ is $\mathfrak R^\natural$-definable if and only if  there exist a positive integer $l$, $\mathfrak R$-definable subsets $Y_1,\ldots, Y_l$ and subfamilies $\mathcal C_i$ of $\mycon(Y_i)$ such that $X=\bigcup_{i=1}^l\bigcup_{S \in \mathcal C_i}S$.
	
	In particular, $\mathfrak R^\natural$ is d-minimal and, for every $\mathfrak R^\natural$-definable set $X$ and a subfamily $\mathcal C$ of $\mycon(X)$, $\bigcup_{S \in \mathcal C}S$ is $\mathfrak R^\natural$-definable. 
\end{theorem}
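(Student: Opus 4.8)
The plan is to fix the family $\mathcal D$ of all subsets of the various $\mathbb R^n$ that can be written as $\bigcup_{i=1}^l\bigcup_{S\in\mathcal C_i}S$ with each $\mathcal C_i$ a subfamily of $\mycon(Y_i)$ for an $\mathfrak R$-definable $Y_i$, and to prove that $\mathcal D$ is exactly the collection of $\mathfrak R^\natural$-definable sets. One inclusion is immediate: each generator $\bigcup_{S\in\mathcal C}S$ is $\mathfrak R^\natural$-definable by definition, and $\mathfrak R^\natural$-definable sets are closed under finite unions, so every member of $\mathcal D$ is $\mathfrak R^\natural$-definable. For the reverse inclusion I would show that $\mathcal D$ is a structure containing $\mathfrak R$ and all the generators; since $\mathfrak R^\natural$ is by definition the smallest such structure, this forces $\mathcal D$ to contain every $\mathfrak R^\natural$-definable set. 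Note that every $\mathfrak R$-definable $Y$ lies in $\mathcal D$ via $\mathcal C=\mycon(Y)$ and $Y=\bigcup_{S\in\mycon(Y)}S$; in particular $\mathcal D$ contains the diagonals and the graphs of $+$ and $<$.

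I would dispatch the easy closure properties first. Closure under finite unions is built into the definition. For a coordinate permutation $\sigma$, which is a homeomorphism, $\sigma\bigl(\bigcup_{S\in\mathcal C}S\bigr)=\bigcup_{S\in\mathcal C}\sigma(S)$ is a union of connected components of the $\mathfrak R$-definable set $\sigma(Y)$; similarly $X\times\mathbb R=\bigcup_{S\in\mathcal C}(S\times\mathbb R)$ is a union of connected components of $Y\times\mathbb R$, since each $S\times\mathbb R$ is connected and clopen in $Y\times\mathbb R$. For intersections the key observation is that if $B_1=\bigcup_{S\in\mathcal C_1}S$ and $B_2=\bigcup_{T\in\mathcal C_2}T$ are basic sets over $Y_1,Y_2$, then every connected component $R$ of the $\mathfrak R$-definable set $Z:=Y_1\cap Y_2$ is a connected subset of $Y_1$ and of $Y_2$, hence is contained in a single component of each; consequently $R$ is either contained in $B_1\cap B_2$ or disjoint from it, so $B_1\cap B_2=\bigcup\{R\in\mycon(Z):R\subseteq B_1\cap B_2\}$ is basic over $Z$. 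Distributing over finite unions gives closure under finite intersections, and closure under complement then follows from the identity $\mathbb R^n\setminus\bigcup_{S\in\mathcal C}S=(\mathbb R^n\setminus Y)\cup\bigcup_{S\in\mycon(Y)\setminus\mathcal C}S$ for a single basic set, together with De Morgan's laws and the intersection closure just proved.

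The genuinely hard step is closure under projection $\pi\colon\mathbb R^n\to\mathbb R^{n-1}$, and this is exactly where Theorem \ref{thm:decomposition_into_multicells} is used. First I would record that under either hypothesis every $\mathfrak R$-definable set admits a multi-cell decomposition: under (b) this is the theorem directly, while under (a) one conjugates by the coordinatewise definable homeomorphism onto a bounded box, applies case (a) to the now bounded image, and transfers the decomposition back, noting that a monotone definable homeomorphism carries graphs to graphs and bands to bands and commutes with $\pi$, so multi-cells are preserved. Given this, let $B=\bigcup_{S\in\mathcal C}S$ be basic over $Y$ and let $\mathcal M$ partition $\{Y\}$ into multi-cells. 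Every component of $Y$ is a union of components of the $M\in\mathcal M$, so $B$ is a finite union over $M\in\mathcal M$ of unions of connected components of multi-cells; since $\pi$ distributes over unions it suffices to project a union $\bigcup_{C\in\mathcal C'}C$ of components of a single multi-cell $M$. By the definition of multi-cell, $\pi(C)$ is a connected component of the $\mathfrak R$-definable multi-cell $\pi(M)$ for every $C\in\mycon(M)$, so $\pi\bigl(\bigcup_{C\in\mathcal C'}C\bigr)=\bigcup_{C\in\mathcal C'}\pi(C)$ is a union of connected components of $\pi(M)$ and hence lies in $\mathcal D$. This completes the verification that $\mathcal D$ is a structure, so $\mathcal D$ coincides with the $\mathfrak R^\natural$-definable sets, which is the displayed equivalence.

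It remains to derive the two ``in particular'' assertions. For d-minimality, let $X\in\mathcal D$ be a subset of $\mathbb R^{n+1}$; writing $X$ as a finite union of basic sets and summing the resulting bounds, I reduce to a single $B=\bigcup_{S\in\mathcal C}S$ over an $\mathfrak R$-definable $Y\subseteq\mathbb R^{n+1}$, with $N$ the d-minimality bound of $Y$ in $\mathfrak R$. The crucial point is that if $p\in B_x$ and $p\in\myint(Y_x)$, then some interval $\{x\}\times(p-\varepsilon,p+\varepsilon)$ is contained in $Y$, is connected, lies in a single component of $Y$, and that component meets $B$ and is therefore chosen, so $p\in\myint(B_x)$; thus $B_x\setminus\myint(B_x)\subseteq Y_x\setminus\myint(Y_x)$, which is a union of at most $N$ discrete sets. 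Hence $B_x$ is the union of the open set $\myint(B_x)$ and at most $N$ discrete sets, uniformly in $x$, so $\mathfrak R^\natural$ is d-minimal. Finally, for an $\mathfrak R^\natural$-definable $X$ and $\mathcal C\subseteq\mycon(X)$, I would fix a multi-cell decomposition $\mathcal M$ refining all the $\mathfrak R$-definable sets witnessing $X\in\mathcal D$, so that $X$ is a union of connected components of the multi-cells in $\mathcal M$; each $S\in\mycon(X)$ is then a union of such multi-cell components, whence $\bigcup_{S\in\mathcal C}S$ is again a union of connected components of the multi-cells in $\mathcal M$ and therefore lies in $\mathcal D$. The main obstacle throughout is the projection step, that is, controlling the images of connected components, which is precisely the difficulty that the multi-cell decomposition theorem is designed to overcome.
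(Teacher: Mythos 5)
Your proposal is correct and follows essentially the same route as the paper: both characterize the $\mathfrak R^\natural$-definable sets as the class of finite unions of component-unions by verifying closure under Boolean operations and projection, with Theorem \ref{thm:decomposition_into_multicells} supplying the multi-cell decomposition that makes the projection step work. The only notable (and harmless) deviations are your d-minimality argument, which bounds $B_x\setminus\myint(B_x)$ by $Y_x\setminus\myint(Y_x)$ directly rather than reducing to multi-cell fibers as the paper does, and your greater explicitness about closure under permutations and products and about the transfer to the bounded case under hypothesis (a).
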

\begin{proof}
	Let $\mathcal S_n$ be the collection of subsets $X$ of $\mathbb R^n$ such that  there exist a positive integer $l$, $\mathfrak R$-definable subsets $Y_1,\ldots, Y_l$ and subfamilies $\mathcal C_i$ of $\mycon(Y_i)$ with $X=\bigcup_{i=1}^l\bigcup_{S \in \mathcal C_i}S$.
	Observe that every $\mathfrak R$-definable subset of $\mathbb R^n$ belongs to $\mathcal S_n$.
	We may assume that $Y_1,\ldots Y_l$ are multi-cells for the following reason:
	
	We may assume $l=1$.
	We can decompose $Y_1$ into finitely many multi-cells $C_1,\ldots, C_k$ by assumptions (a,b) and Theorem \ref{thm:decomposition_into_multicells}.
	For every $1 \leq i \leq k$, $D \in \mycon(C_i)$ and $S \in \mathcal C_1$, we have either $D \subseteq S$ or $D \cap S=\emptyset$ because $D$ and $S$ are connected.
	Put $\mathcal D_i=\{D \in \mycon(C_i)\;|\; D \subseteq S \text{ for some } S \in \mathcal C_1\}$.
	We have $X=\bigcup_{i=1}^k \bigcup_{D \in \mathcal D_i}D$.
	By considering $C_1,\ldots, C_k$ and $\mathcal D_1,\ldots, \mathcal D_k$ instead of $Y_1$ and $\mathcal C_1$, we may assume that $Y_1,\ldots, Y_l$ are multi-cells.

	It is obvious that every member of $\mathcal S_n$ is $\mathfrak R^\natural$-definable.
	We show that every $\mathfrak R^\natural$-definable subset $X$ of $\mathbb R^n$ belongs to $\mathcal S_n$.
	By induction on the complexity of the formula defining $X$, we have only to prove the following:
	\begin{enumerate}
		\item[(i)] If $A, B \in \mathcal S_n$, we have $A \cap B \in \mathcal S_n$;
		\item[(ii)] If $A \in \mathcal S_n$, we have $\mathbb R^n \setminus A \in \mathcal S_n$.
		\item[(iii)] If $A \in \mathcal S_n$, we have $\pi(A) \in \mathcal S_{n-1}$, where $\pi:\mathbb R^n \to \mathbb R^{n-1}$ denotes the projection forgetting the last coordinate.
	\end{enumerate}
	By the definition of $\mathcal S_n$, there exist a positive integer $l_A$, $\mathfrak R$-definable subsets $Y_{A1},\ldots, Y_{Al_A}$ and subfamilies $\mathcal C_{Ai}$ of $\mycon(Y_{Ai})$ with $A=\bigcup_{i=1}^{l_A}\bigcup_{S \in \mathcal C_{Ai}}S$.
	We define $l_B$, $Y_{B1},\ldots, Y_{Bl_B}$ and $\mathcal C_{Bi}$ in the same manner.
	
	We prove clause (i).
	We can easily reduce to the case where $l_A=l_B=1$.
	Let $D$ be an element of $\mycon(Y_{A1} \cap Y_{B1})$.
	For every $S \in \mycon(Y_{A1}) \cup \mycon(Y_{B1})$, we have either $D \subseteq S$ or $D \cap S=\emptyset$ because $D$ and $S$ are connected, $D \subseteq Y_{A1}$ and $D \subseteq Y_{B1}$.
	Put $\mathcal C=\{D \in \mycon(Y_{A1} \cap Y_{B1})\;|\; \exists S_A \in \mathcal C_{A1} \ \exists S_B \in \mathcal C_{B1} \ D \subseteq S_A \cap S_B \}$.
	We have $A \cap B=\bigcup_{D \in \mathcal C}D$, which implies $A \cap B \in \mathcal S_n$.

	We prove clause (ii).
	By clause (i), we may assume $l_A=1$.
	We have $\mathbb R^n \setminus A = (\mathbb R^n \setminus Y_{A1}) \cup \bigcup_{S \in \mycon(Y_{A1}) \setminus \mathcal C_{A1}}S$.
	This deduces that $\mathbb R^n \setminus A \in \mathcal S_n$.

	We prove clause (iii).
	We may assume $l_A=1$ and $Y_{A1}$ is a multi-cell.
	By the definition of multi-cells, for every connected component of $D$ of $Y_{A1}$, $\pi(D)$ is a connected component of $\pi(Y_{A1})$.
	Put $\mathcal E=\{E \in \mycon(\pi(Y_{A1}))\;|\; E=\pi(D) \text{ for some }D \in \mathcal C_{A1}\}$.
	We have $\pi(A)=\bigcup_{E \in \mathcal E}E$, which implies $\pi(A) \in \mathcal S_{n-1}$.

	The remaining task is to show the `in particular' part.
	We only prove that $\mathfrak R^\natural$ is d-minimal.
	The remaining part is easy, and we omit the proof.
	For every subset $S$ of $\mathbb R^n$ and $x \in \mathbb R^{n-1}$, we denote $\{y \in \mathbb R\;|\; (x,y) \in S\}$ by $S_x$.
	Let $X$ be an $\mathcal R^\natural$-definable subset of $\mathbb R^n$.
	We show that there exists a positive integer $m$ such that $X_x$ is a union of an open set and at most $m$ discrete sets for every $x \in \mathbb R^{n-1}$.
	There exist a positive integer $l$, $\mathfrak R$-definable subsets $Y_1,\ldots, Y_l$ and subfamilies $\mathcal C_i$ of $\mycon(Y_i)$ with $X=\bigcup_{i=1}^l\bigcup_{S \in \mathcal C_i}S$.
	We can easily reduce to the case where $l=1$ and, furthermore, we may assume that $Y_1$ is a multi-cell by Theorem \ref{thm:decomposition_into_multicells}.
	By the definition of multi-cells, $(Y_1)_x$ is either open or discrete for every $x \in \mathbb R^{n-1}$.
	$X_x=\bigcup_{S \in \mathcal C_1}S_x$ is also either open or discrete.
\end{proof}

\begin{remark}\label{rem:almost}
	An expansion $\mathfrak R=(R,<,\ldots)$ of dense linear order without endpoints is called \textit{almost o-minimal} if every bounded definable subset of $R$ is a union of a finite set and finitely many open intervals.
	Let $\mathfrak R=(R,<,+,\ldots)$ be an almost o-minimal expansion of an ordered group.
	Note that we do not assume $R=\mathbb R$ here and every locally o-minimal structure whose underlying space is $\mathbb R$ is almost o-minimal by \cite[Proposition 4.11(1)]{Fuji_almost}.
	
	We slightly modify the definition of $\mathfrak R^\natural$ as follows: 
	 $\mathfrak R^\natural$ denotes the expansion of $\mathfrak R$ generated by all the sets of the form $\bigcup_{S \in \mathcal C}S$ for an $n \in \mathbb N$, an $\mathfrak R$-definable subset $X$ of $R^n$ and a subfamily $\mathcal C$ of $\mycon_{\text{semi}}(X)$.
	Here, $\mycon_{\text{semi}}(X)$ denotes the collection of semi-definable connected components.
	Semi-definably connected components of a definable set are defined in \cite[Theorem 3.6]{Fuji_almost}.
	
	A decomposition theorem into multi-cells holds for almost o-minimal expansion of an ordered group \cite[Theorem 4.22]{Fuji_almost}.
	Using this instead of Theorem \ref{thm:decomposition_into_multicells}, we can prove the following assertion in the same manner as Theorem \ref{thm:natural}:
	
	A subset $X$ of $R^n$ is $\mathfrak R^\natural$-definable if and only if there exist a positive integer $l$, $\mathfrak R$-definable subsets $Y_1,\ldots, Y_l$ and subfamilies $\mathcal C_i$ of $\mycon_{\text{semi}}(Y_i)$ such that $$X=\bigcup_{i=1}^l\bigcup_{S \in \mathcal C_i}S.$$
	In particular, for every $\mathfrak R^\natural$-definable set $X$ and a subfamily $\mathcal C$ of $\mycon(X)$, $\bigcup_{S \in \mathcal C}S$ is $\mathfrak R^\natural$-definable. 
		
	This assertion implies that $\mathfrak R^\natural$ is almost o-minimal.
\end{remark}


\begin{thebibliography}{99}
	
\bibitem{DMST}
A. Dolich, C. Miller, A. Savatovsky and A. Thamrongthanyalak, 
\emph{Connectedness in structures on the real numbers: o-minimality and undecidability},
J. Symbolic Logic, \textbf{87} (2022) 1243-1259.

\bibitem{vdD_dmin}
L. van den Dries, 
\emph{The field of reals with a predicate for the power of two},
Manuscripta Math., \textbf{54} (1985), 187--195.
	

\bibitem{Fuji_almost}
M. Fujita, 
\emph{Almost o-minimal structures and $\mathfrak X$-structures}, 
Ann. Pure Appl. Logic, \textbf{173} (2022), 103144.

\bibitem{Fuji_tame}
M. Fujita,
\emph{Locally o-minimal structures with tame topological properties},
J. Symbolic Logic, \textbf{88} (2023), 219--241.


\bibitem{Miller-dmin}
C. Miller,
\emph{Tameness in expansions of the real field},
In M. Baaz, S. -D. Friedman and J. Kraj\'{i}\u{c}ek eds., 
Logic Colloquium \textquotesingle 01,
Cambridge University Press (2005),
281--316.

\bibitem{Miller-choice}
C. Miller,
\emph{Definable choice in d-minimal expansions of ordered groups},
unpublished note available at \texttt{https://people.math.osu.edu/miller.1987/eidmin.pdf} (2006)

\bibitem{Sav}
A. Savatovsky,
\emph{Structure theorems for d-minimal expansions of the real additive ordered group and some consequences},
PhD Thesis, Universit\"at Konstanz (2020).

\bibitem{T}
A. Thamrongthanyalak, 
\emph{Michael's selection theorem in d-minimal expansions of the real field},
Proc. Amer. Math. Soc., \textbf{147} (2019), 1059-1071.

\end{thebibliography}
\end{document}